\newtheorem{thm}{Theorem}[section]
\newtheorem{lem}[thm]{Lemma}
\newtheorem{lem-dfn}[thm]{Lemma-Definition}
\newtheorem{prop}[thm]{Proposition}
\theoremstyle{definition}
\newtheorem{defn}[thm]{Definition}
\newtheorem{ex}[thm]{Example}
\newtheorem{nota}[thm]{Notation}
\newtheorem{quest}[thm]{Question}
\newtheorem{noname}[thm]{}
\theoremstyle{remark}
\newtheorem{rem}[thm]{Remark}
\numberwithin{equation}{section}
\newcommand{\thmref}[1]{Theorem~\ref{#1}}
\newcommand{\lemref}[1]{Lemma~\ref{#1}}
\newcommand{\proref}[1]{Proposition~\ref{#1}}
\newcommand{\remref}[1]{Remark~\ref{#1}}
\newcommand{\figref}[1]{Figure~\ref{#1}}
\DeclareMathOperator{\Supp}{Supp}
\DeclareMathOperator{\Spec}{Spec}
\DeclareMathOperator{\spec}{Spec}
\DeclareMathOperator{\supp}{Supp}
\DeclareMathOperator{\Hom}{Hom}
\DeclareMathOperator{\Img}{Im}
\DeclareMathOperator{\tr}{Tr}
\DeclareMathOperator{\Tr}{Tr}
\DeclareMathOperator{\di}{div}
\DeclareMathOperator{\chr}{char}
\DeclareMathOperator{\cff}{cff}
\newcommand{\m}{\mathfrak m}
\newcommand{\frq}{{\mathfrak q}}
\newcommand{\PP}{\mathbb P}
\newcommand{\Q}{\mathbb Q}
\newcommand{\C}{\mathbb C}
\newcommand{\cF}{\mathcal F}
\newcommand{\cO}{\mathcal O}
\newcommand{\GL}{\mathrm{GL}}
\newcommand{\Z}{Z_f}
\newcommand{\defset}[2]{{\left\{#1\,\left| \,#2 \right. \right\}}}
\begin{document}
\title[Nearly Gorenstein rational surface singularities]{Nearly Gorenstein rational surface singularities}

\author{Kyosuke Maeda}
\address[Kyosuke Maeda]{Department of Mathematics, 
College of Humanities and Sciences, 
Nihon University, Setagaya-ku, Tokyo, 156-8550, Japan}
\email{maeda127k@gmail.com}
\author{Tomohiro Okuma}
\address[Tomohiro Okuma]{Department of Mathematical Sciences, 
Yamagata University,  Yamagata, 990-8560, Japan.}
\email{okuma@sci.kj.yamagata-u.ac.jp}
\author{Kei-ichi Watanabe}
\address[Kei-ichi Watanabe]{Department of Mathematics, College of Humanities and Sciences, 
Nihon University, Setagaya-ku, Tokyo, 156-8550, Japan and 
Organization for the Strategic Coordination of Research and Intellectual Properties, Meiji University
}
\email{watnbkei@gmail.com}
\author{Ken-ichi Yoshida}
\address[Ken-ichi Yoshida]{Department of Mathematics, 
College of Humanities and Sciences, 
Nihon University, Setagaya-ku, Tokyo, 156-8550, Japan}
\email{yoshida.kennichi@nihon-u.ac.jp}
\thanks{TO was partially supported by JSPS Grant-in-Aid 
for Scientific Research (C) Grant Number 21K03215.
KW  was partially supported by JSPS Grant-in-Aid 
for Scientific Research (C) Grant Number 23K03040.
KY was partially supported by JSPS Grant-in-Aid 
for Scientific Research (C) Grant Number 24K06678.}
\date{\today}
\dedicatory{Dedicated to the memory of J\"urgen Herzog}
\keywords{Two-dimensional normal local domain, nearly Gorenstein local ring, rational singularity, trace ideal, almost reduced fundamental cycle, 
quotient singularity}
\subjclass[2020]{Primary: 14J17; Secondary: 14B05, 13B22, 13H10}

\begin{abstract} 
In this paper, we show that for any rational surface singularity $A$, 
the canonical trace ideal $\Tr_A(K_A)$ is integrally closed ideal 
which is represented 
by the minimal anti-nef cycle $F$ 
on the minimal resolution of singularities 
so that $K_X+F$ is anti-nef. 
Then $F \ge \Z$ if $A$ is not Gorenstein, where $\Z$ is the 
fundamental cycle. 
As a result, we give a criterion for rational surface singularity $A$ 
to be nearly Gorenstein. 
\par 
Moreover, we classify all nearly Gorenstein rational singularities 
in terms of resolution of singularities  
in the following cases: (a) the fundamental cycle $\Z$ is almost reduced; 
(b) quotient singularity.  
 
\end{abstract}

\maketitle

\section{Introduction}

First, we introduce the notions of trace ideals and nearly Gorensteinness in a general setting.
Let $(A,\m)$ be a Cohen-Macaulay local ring with maximal ideal $\m$ 
or a graded ring with graded maximal ideal $\m$.
Assume that $A$ has a canonical module $K_A$ and let $Q(A)$ be the total ring of fractions.
For an $A$-module $M$, the {\em trace ideal}
 of $M$ is defined as follows:
\[
\tr_A(M):=\sum_{f\in \Hom_A(M,A)}\Img f= \langle f(x) \,|\, f\in \Hom_A(M,A), \, x\in M \rangle
\]
For any ideal $I\subset A$ of positive grade, 
$\tr_A I = I\cdot I^{-1}$ holds true, where 
$I^{-1}=\defset{x\in Q(A)}{xI\subset A}$ (see \cite[1.1]{HHS}). 
In particular, $\tr_A(K_A)=K_A \cdot K_A^{-1}$. 
\par 
Herzog et.al \cite{HHS} introduced the following notion. 
\begin{defn}[\textrm{cf. \cite[2.2]{HHS}}]
We call $\Tr_A(K_A)$ the {\em canonical trace ideal} of $A$. 
A local ring $(A,\m)$ is said to be {\em nearly Gorenstein} 
if $\tr_A(K_A) \supset \m$.
\end{defn}

Note that $A$ is Gorenstein if $\tr_A(K_A) = A$.
By the definition, Gorenstein rings are nearly Gorenstein, and $A$ is nearly Gorenstein but not Gorenstein if and only if $\tr_A(K_A)=\m$.


\par 
We want to determine the condition for $A$ to be nearly Gorenstein when 
$(A,\m)$ is a {\em normal} local domain of dimension $2$.   
We assume also that $A$ contains an algebraically closed field isomorphic 
to $A/\m$.   
Also, we suppose that there exists a resolution of singularities 
(e.g. $A$ is excellent). We simply call such $A$ a (normal) 
\textit{surface singularity}. We assume that $A$ has a canonical module $K_A$.


We want to determine the condition for $A$ to be nearly Gorenstein 
{\em using resolution of singularities of $A$}. 
 
For a general surface singularity, this problem is rather hard.  But if 
$A$ is a {\em rational singularity}, we can find very simple criterion 
for $A$ to be nearly Gorenstein. See Theorem \ref{Main2}.


In \cite[Proposition 3.1]{Di}, S. Ding classified 
all two-dimensional nearly Gorenstein quotient singularities 
(see also \cite{CS} for higher-dimensional cyclic quotient singularities). 
However it seems to be not known when a rational singularity is nearly Gorenstein 
even if $\dim A=2$ and there is no general method to compute canonical trace ideals.  
So the main purpose of this paper is to establish a criterion for two-dimensional 
rational singularities to be nearly Gorenstein and give the computation of 
canonical trace ideals in terms of resolution of singularities.  

\par \vspace{2mm}
Let us explain the organization of the paper.  
In Sections 2 and 3, we first investigate a geometric description of
(canonical) trace ideals, and 
we show how to compute the canonical trace ideal 
$\Tr_A(K_A)$ for a rational surface singularity $A$. 

\begin{thm}[\textrm{cf. Theorem \ref{rat-cantr}}]  \label{Main1}
Assume that $A$ is a two-dimensional rational singularity, and let $K_A$ be a canonical module of $A$. 
Let $\Z$ denote the fundamental cycle on the minimal resolution $X \to \Spec A$. 
Then $\Tr_A(K_A)$ is an integrally closed $\m$-primary ideal which is represented on $X$ as follows: 
\[
\Tr_A(K_A)=H^0(X,\mathcal{O}_X(-F)),  
\]
where $F$ is the minimal cycle such that 
$F+K_X$ is anti-nef (cf. \ref{notation} (3)).
Note that $F \ge \Z$ if $K_X \not \equiv 0$ since $F$ is anti-nef. 
\end{thm}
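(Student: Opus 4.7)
The plan is to leverage Lipman's theory of integrally closed ideals on rational surface singularities. For such $A$, every $\m$-primary integrally closed ideal $I$ corresponds bijectively to an effective anti-nef cycle $D$ on $X$ via $I\cdot\mathcal{O}_X=\mathcal{O}_X(-D)$ and $I=H^0(X,\mathcal{O}_X(-D))$; moreover, products of integrally closed ideals remain integrally closed and their associated cycles add. The geometric description of (canonical) trace ideals developed in Sections 2 and 3 presumably extends this correspondence to the fractional ideals involved in $\Tr_A(K_A)=K_A\cdot K_A^{-1}$.

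The first substantive step is to represent $K_A$ geometrically. By Grauert--Riemenschneider and rationality of $A$ one has $K_A\cong H^0(X,\omega_X)$; fixing a generic rational $2$-form as generator realizes $K_A$ as a fractional ideal whose pullback to $X$ is an invertible sheaf $\mathcal{O}_X(-D_K)$, where $D_K$ encodes $K_X$ together with the minimal exceptional correction needed to realize $K_A$ as an ideal sheaf. A parallel analysis applied to $K_A^{-1}=\{x\in Q(A):xK_A\subset A\}$ yields a cycle $D_K'$ with $K_A^{-1}\cdot\mathcal{O}_X=\mathcal{O}_X(-D_K')$, and the two minimality conditions force $F:=D_K+D_K'$ to be anti-nef (this is automatic on the minimal resolution, since every $E_i$ satisfies $E_i^2\leq-2$) and indeed to be the minimal cycle with $F+K_X$ anti-nef.

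Taking the product then gives $\Tr_A(K_A)\cdot\mathcal{O}_X=\mathcal{O}_X(-F)$, hence the inclusion $\Tr_A(K_A)\subseteq H^0(X,\mathcal{O}_X(-F))$. The main obstacle is the reverse inclusion: each section $s\in H^0(X,\mathcal{O}_X(-F))$ must actually arise as a finite sum $\sum f_ig_i$ with $f_i\in K_A$, $g_i\in K_A^{-1}$. For this I would invoke Lipman's cohomology vanishing $H^1(X,\mathcal{O}_X(-D))=0$ for anti-nef $D$ on a rational singularity, which yields surjectivity of the multiplication map on global sections; integral closedness of $\Tr_A(K_A)$ and its identification with the cycle $F$ then follow from Lipman's correspondence. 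Finally, if $A$ is not Gorenstein then $K_X\not\equiv 0$ on the exceptional divisor (Gorenstein coincides with numerically Gorenstein for rational singularities), which forces $F\neq 0$; as $\Z$ is by definition the minimal positive anti-nef cycle, $F\geq\Z$.
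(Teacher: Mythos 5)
Your proposal is correct and follows essentially the same route as the paper's: both reduce to $\tr_A(K_A)=K_A\cdot K_A^{-1}$, represent $K_A=H^0(\cO_X(K_X))$ and $K_A^{-1}=H^0(\cO_X(-K_X))=H^0(\cO_X(-K_X-F))$ on the minimal resolution with $F$ the fixed part of $-K_X$ (equivalently, the minimal cycle making $K_X+F$ anti-nef), and then multiply global sections to land in $H^0(\cO_X(-F))$. The one imprecision is your appeal to $H^1(\cO_X(-D))=0$ for the surjectivity of the multiplication map: that vanishing alone does not give surjectivity, and the paper instead uses that nef divisors on a resolution of a rational singularity are generated, takes two general sections to form the Koszul sequence $0\to\cO_X\to\cO_X(L_1)\oplus\cO_X(L_2)\to\cO_X(L_1+L_2)\to 0$, and concludes from $h^1(\cO_X)=0$ (Lemma 3.3); with that substitution (or with Lipman's product theorem for integrally closed ideals, suitably adapted to the fractional ideals here) your argument matches the paper's.
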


\par \vspace{2mm}
In Section 4, we give a criterion for two-dimensional rational singularity 
to be nearly Gorenstein. 

\begin{thm}[\textrm{see Theorem \ref{t:nGrat}}] \label{Main2}
Assume that $A$ is a rational singularity, but not Gorenstein.
Then the following conditions are equivalent.
\begin{enumerate}
\item $A$ is nearly Gorenstein.
\item $K_X+\Z$ is anti-nef. 
\item  For every component $E_i$ with $E_i^2\le -3$, we have $\Z E_i \le E_i^2 + 2$.
\end{enumerate}
\end{thm}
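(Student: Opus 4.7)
The plan is to combine Theorem \ref{Main1} with the standard dictionary (developed in Sections 2 and 3) between integrally closed $\m$-primary ideals of a rational surface singularity $A$ and positive anti-nef cycles on the minimal resolution $X$: every such ideal $I$ is uniquely of the form $H^0(X,\OO_X(-C))$ for some positive anti-nef cycle $C$, and $I\supset J$ if and only if $C_I\le C_J$ in the natural partial order on cycles.

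For $(1)\Leftrightarrow(2)$, I would first invoke Theorem \ref{Main1} to write $\Tr_A(K_A)=H^0(X,\OO_X(-F))$, where $F$ is the minimal anti-nef cycle with $F+K_X$ anti-nef, and observe that $F\ge \Z$: indeed, the non-Gorenstein hypothesis forces some $E_i$ with $E_i^2\le -3$, so by adjunction $K_X\cdot E_i>0$, meaning $K_X$ itself is not anti-nef, whence $F>0$ and so $F\ge\Z$. Using the classical identification $\m=H^0(X,\OO_X(-\Z))$, the containment $\Tr_A(K_A)\supset\m$ translates via the dictionary to the cycle inequality $F\le\Z$. Combined with $F\ge\Z$, this forces $F=\Z$, which by the minimality property of $F$ is equivalent to $\Z+K_X$ being anti-nef.

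For $(2)\Leftrightarrow(3)$, since each exceptional component $E_i$ on the minimal resolution of a rational singularity is a smooth rational curve, adjunction gives $K_X\cdot E_i=-E_i^2-2$. Hence $(K_X+\Z)\cdot E_i\le 0$ is equivalent to $\Z E_i\le E_i^2+2$. For components with $E_i^2=-2$, the right-hand side equals $0$ while the left-hand side is non-positive since $\Z$ is anti-nef, so the inequality is automatic; therefore only the components with $E_i^2\le -3$ impose actual constraints, which is precisely condition (3).

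The main potential obstacle is having the order-reversing correspondence between $\m$-primary integrally closed ideals and positive anti-nef cycles available in the precise form needed, together with the identification $\m=H^0(X,\OO_X(-\Z))$. Both are classical facts for rational surface singularities (going back to Lipman) and should be established in the earlier sections alongside Theorem \ref{Main1}; once they are in place, the theorem reduces to the minimality of $F$ and a direct application of the adjunction formula.
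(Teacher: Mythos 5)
Your proposal is correct and takes essentially the same approach as the paper's proof of Theorem \ref{t:nGrat}: identify $\Tr_A(K_A)=H^0(\cO_X(-F))$ and $\m=H^0(\cO_X(-\Z))$, use $K_X\not\equiv 0$ to force $F\ge\Z$, conclude $F=\Z$, convert this to anti-nefness of $K_X+\Z$ via the minimality of $F$, and finish with adjunction $K_XE_i=-E_i^2-2$ (with the $(-2)$-curves imposing no condition since $\Z$ is anti-nef). The only cosmetic difference is that you pass from the containment $H^0(\cO_X(-F))\supset H^0(\cO_X(-\Z))$ to $F\le\Z$ via Lipman's order-reversing correspondence, whereas the paper argues that $-F$ and $-\Z$ have no fixed components (Lemma \ref{l:invrat}) so equality of the two $H^0$'s forces $F=\Z$.
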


\par \vspace{2mm}
In Section 5, we 
classify two-dimensional nearly Gorenstein 
rational singularities having the almost reduced fundamental 
cycle $\Z$, 
that is, $\cff_{E_i}(\Z)=1$ for any component $E_i$ with $-E_i^2 \ge 3$.  
Our theorem gives an analogue of well-known 
classification of rational double points (Gorenstein rational singularities).  
As a special case, we can recover the classification of nearly 
Gorenstein rational triple points (see \cite{MY}). 

\begin{thm}[\textrm{cf. Theorem \ref{ARnG}}] \label{Main3}
Assume that the fundamental cycle $\Z$ is almost reduced. 
Then the following conditions are equivalent$:$ 
\begin{enumerate}
\item $A$ is nearly Gorenstein. 
\item The resolution graph of $X$ and $\Z$ is one of the following lists, 
where $\circ$ stands $(-2)$-curve. 
\par 
\begin{figure}[htb]
\begin{picture}(400,50)(-30,0)
    \thicklines
\put(-30,20){\text{Type $A$}}
\put(25,28){{\tiny $1$}}
\put(30,22){\circle*{6}}
\put(33,22){\line(1,0){30}}
%
\put(55,28){{\tiny $1$}}
\put(61,22){\circle*{6}}
%
\put(65,22){\line(1,0){20}}
\put(86,19){$\cdots$}
\put(120,28){{\tiny $1$}}
\put(100,22){\line(1,0){21}}
\put(125,22){\circle*{6}}
%
\put(147,28){{\tiny $1$}}
\put(128,22){\line(1,0){23}}
\put(155,22){\circle*{6}}
%
\end{picture}

\begin{picture}(400,50)(-30,0)
    \thicklines
\put(-30,20){\text{Type $D$}}
\put(25,23){{\tiny $1$}}
\put(30,17){\circle*{6}}
\put(33,17){\line(1,0){25}}
%
\put(55,23){{\tiny $2$}}
\put(61,17){\circle{6}}
\put(65,17){\line(1,0){20}}
\put(86,14){$\cdots$}
\put(120,23){{\tiny $2$}}
\put(100,17){\line(1,0){21}}
\put(125,17){\circle{6}}
%
\put(147,23){{\tiny $2$}}
\put(128,17){\line(1,0){23}}
\put(155,17){\circle{6}}
%
\put(177,23){{\tiny $1$}}
\put(158,17){\line(1,0){24}}
\put(185,17){\circle*{6}}
\put(146,43){{\tiny $1$}}
\put(155,19){\line(0,1){21}}
\put(155,43){\circle*{6}}
\end{picture}

\begin{picture}(400,45)(-30,0)
    \thicklines
 \put(-30,20){\text{Type $E_6$}}   
  \put(25,18){{\tiny $1$}}
\put(30,12){\circle*{6}}
\put(33,12){\line(1,0){24}}
  \put(55,18){{\tiny $2$}}
\put(60,12){\circle{6}}
\put(63,12){\line(1,0){24}}
  \put(80,18){{\tiny $3$}}
\put(90,12){\circle{6}}
\put(93,12){\line(1,0){24}}
  \put(115,18){{\tiny $2$}}
\put(120,12){\circle{6}}
\put(123,12){\line(1,0){24}}
  \put(145,18){{\tiny $1$}}
\put(150,12){\circle*{6}}
\put(90,16){\line(0,1){15}}
  \put(83,38){{\tiny $2$}}
\put(90,34){\circle{6}}
\end{picture}

\begin{picture}(400,55)(30,0)
\thicklines
 \put(30,20){\text{Type $E_7$}}   
\put(88,23){{\tiny $1$}}
\put(97,17){\line(1,0){24}}
\put(93,17){\circle*{6}}
%
\put(118,23){{\tiny $2$}}
\put(127,17){\line(1,0){24}}
\put(123,17){\circle{6}}
%
\put(150,23){{\tiny $3$}}
\put(155,17){\circle{6}}
%
\put(177,23){{\tiny $4$}}
\put(158,17){\line(1,0){23}}
\put(185,17){\circle{6}}
%
\put(207,23){{\tiny $3$}}
\put(188,17){\line(1,0){24}}
\put(215,17){\circle{6}}
%
\put(176,43){{\tiny $2$}}
\put(185,19){\line(0,1){21}}
\put(185,43){\circle{6}}
\put(237,23){{\tiny $2$}}
\put(218,17){\line(1,0){24}}
\put(245,17){\circle{6}}
%
\end{picture}

\begin{picture}(400,50)(-30,0)
    \thicklines
 \put(-30,20){\text{Type $E_8$}}   
  \put(25,18){{\tiny $2$}}
\put(30,12){\circle{6}}
\put(35,12){\line(1,0){20}}
  \put(55,18){{\tiny $3$}}
\put(60,12){\circle{6}}
\put(65,12){\line(1,0){20}}
  \put(83,18){{\tiny $4$}}
\put(90,12){\circle{6}}
\put(95,12){\line(1,0){20}}
  \put(115,18){{\tiny $5$}}
\put(120,12){\circle{6}}
\put(125,12){\line(1,0){20}}
  \put(143,18){{\tiny $6$}}
\put(150,12){\circle{6}}
\put(155,12){\line(1,0){20}}
  \put(175,18){{\tiny $4$}}
\put(180,12){\circle{6}}
\put(185,12){\line(1,0){20}}
  \put(205,18){{\tiny $2$}}
\put(210,12){\circle{6}}
%
\put(150,16){\line(0,1){15}}
  \put(142,38){{\tiny $3$}}
\put(150,34){\circle{6}}
\end{picture}
\caption{\label{fig:Center-nG} nG rational singularity of almost reduced fundamental cycle}
\end{figure}
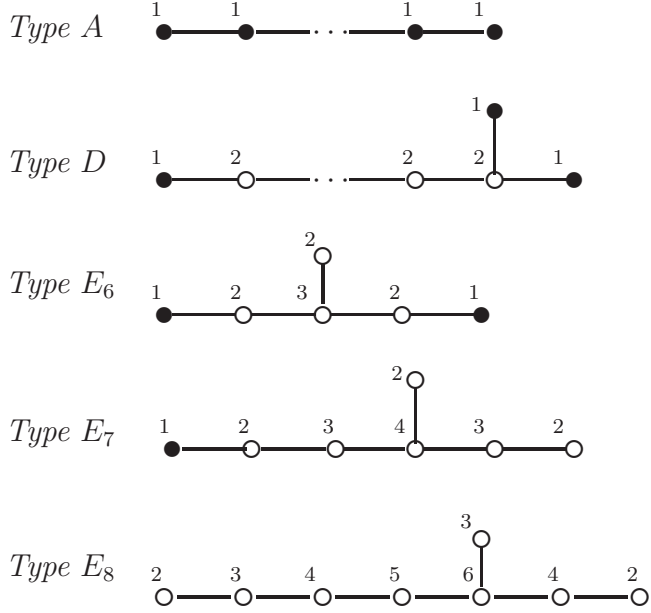
\end{enumerate}

\par
Note that $A$ is Gorenstein if and only if 
all curves are $(-2)$-curves. 
\end{thm}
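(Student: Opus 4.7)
The plan is to combine the nearly Gorenstein criterion of Theorem \ref{Main2}(3) with the almost reduced hypothesis and then analyse the dual graph $\Gamma$ spectrally. Under these assumptions every vertex $E_i$ with $E_i^2 \le -3$ (``heavy'') has $\cff_{E_i}(\Z) = 1$, and the inequality $\Z \cdot E_i \le E_i^2 + 2$ rewrites as $\sum_{E_j \sim E_i} \cff_{E_j}(\Z)(E_i \cdot E_j) \le 2$. Assuming simple transversal intersections on the minimal resolution (the standard situation for rational singularities), this says the coefficients of the neighbours of each heavy vertex sum to at most $2$. Let $M$ be the intersection matrix of $\Gamma$, let $M'$ be obtained from $M$ by replacing every diagonal entry $E_i^2 \le -3$ by $-2$, and let $v$ be the coefficient vector of $\Z$. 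Combining anti-nef at $(-2)$-curves with the displayed inequality at heavy vertices gives $M'v \le 0$ componentwise with $v > 0$.

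Writing $M' = -2I + A_\Gamma$ where $A_\Gamma$ is the adjacency matrix of $\Gamma$, the Collatz--Wielandt inequality yields $\rho(A_\Gamma) \le 2$. By the classical classification of graphs with spectral radius at most $2$, the tree $\Gamma$ must be one of the Dynkin diagrams $A_n, D_n, E_6, E_7, E_8$ or an extended Dynkin tree $\tilde D_n, \tilde E_6, \tilde E_7, \tilde E_8$ (the cyclic $\tilde A_n$ is excluded because $\Gamma$ is a tree). To rule out the affine cases, suppose $\Gamma$ is affine ADE and let $w > 0$ be its Perron eigenvector, so that $M'w = 0$. Then $0 = w^T M' v = \sum_i w_i (M'v)_i$, combined with $M'v \le 0$ and $w > 0$, forces $M'v = 0$. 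Using $M = M' - \sum_{\text{heavy}}(-E_i^2 - 2)\,e_i e_i^T$, $v_i = 1$ at heavy vertices, and the adjunction identity $E_i \cdot K_X = -E_i^2 - 2$, a short calculation yields $\Z^2 + \Z \cdot K_X = 0$, hence $p_a(\Z) = 1$, contradicting rationality.

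Therefore $\Gamma$ is a genuine ADE Dynkin diagram. Let $Z_{\mathrm{ADE}}$ denote the fundamental cycle of the rational double point obtained by viewing all vertices as $(-2)$-curves. Then $M'\Z \le 0$ and $\Z \ge E$ give $\Z \ge Z_{\mathrm{ADE}}$, while the heavy diagonal entries of $M$ are strictly smaller than those of $M'$, so $M Z_{\mathrm{ADE}} \le M' Z_{\mathrm{ADE}} \le 0$, and the minimality of the fundamental cycle on $\Gamma$ gives $\Z \le Z_{\mathrm{ADE}}$. Hence $\Z = Z_{\mathrm{ADE}}$, and the almost reduced condition forces heavy vertices to occur only at positions where $Z_{\mathrm{ADE}}$ has coefficient $1$. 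Tabulating these positions within each ADE type reproduces precisely Figure \ref{fig:Center-nG} (noting that $E_8$ admits no coefficient-$1$ position, so only the Gorenstein case arises there). The converse implication, that each listed configuration is nearly Gorenstein, is a direct verification via Theorem \ref{Main2}(3). The main obstacle I anticipate is rigorously justifying the spectral comparison $M \leftrightarrow M'$ and the minimality argument $\Z = Z_{\mathrm{ADE}}$; a minor subtlety regarding simple transversal intersections must also be either assumed or reduced to.
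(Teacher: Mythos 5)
Your argument is correct, but it is a genuinely different route from the one in the paper. The paper reduces to case (4.b) of Theorem~\ref{t:nGrat} via Proposition~\ref{p:redchain}, introduces the local quantity $d(E_i)=\sum_{E_jE_i=1}\cff_{E_j}(\Z)$ together with the identity $d(E_i)=(\cff_{E_i}(\Z)-1)b_i+2$, and then performs an explicit combinatorial walk starting from the distinguished curve $E_{i_0}$ with $\cff_{E_{i_0}}(\Z)=2$, splitting into the cases ``node / interior / end'' to generate the types $D$, $E_6$, $E_7$, $E_8$ one edge at a time. You instead work directly with criterion (5) of Theorem~\ref{t:nGrat}, package anti-nefness at $(-2)$-curves and the inequality $\sum_{E_j\sim E_i}\cff_{E_j}(\Z)\le 2$ at heavy vertices into $M'v\le 0$ for the $(-2)$-homogenized matrix $M'=-2I+A_\Gamma$, invoke Collatz--Wielandt and Smith's classification of graphs with $\rho(A_\Gamma)\le 2$, kill the affine types by the adjunction computation $\Z(\Z+K_X)=0$ (which contradicts $\chi(\Z)=1$), and pin down $\Z$ as the highest root by the two-sided minimality comparison between $M$ and $M'$. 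I checked the key steps: $M'v\le 0$ does follow from almost reducedness plus criterion (5); the affine exclusion via $w^TM'v=0$ is sound; and $MZ_{\mathrm{ADE}}\le M'Z_{\mathrm{ADE}}\le 0$ together with minimality on both sides gives $\Z=Z_{\mathrm{ADE}}$. Your proof is more conceptual --- it explains why ADE appears --- at the cost of importing the spectral classification; the paper's is elementary and self-contained. Three small points to tighten: (i) the simple-tree structure of the dual graph (no cycles, $E_iE_j\le 1$) is what justifies ``$M'=-2I+A_\Gamma$'' and should be cited as a property of rational singularities; (ii) Theorem~\ref{t:nGrat} assumes $A$ non-Gorenstein, so the Gorenstein (all-$(-2)$, e.g.\ $E_8$) case must be folded in separately, which is immediate; (iii) in the converse direction ``direct verification'' should include checking that each listed graph is rational with the displayed fundamental cycle --- e.g.\ by noting that Laufer's computation sequence for the pure ADE graph remains valid when coefficient-$1$ curves are made more negative, and that $\Z(\Z+K_X)$ is unchanged under such a modification.
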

\par \vspace{2mm}
In Section 6, we classify two-dimensional 
nearly Gorenstein, quotient singularities 
in terms of the Pinkham-Demazure representation 
(or resolution graph). 
Note that our proof gives another proof for
Ding's classification \cite{Di}.

\begin{thm}[\textrm{see Theorem $\ref{Quot-nG}$, \cite[Proposition 3.5]{Di}}]  \label{Main4}
Assume $R=R(\PP^1, D)$ is a quotient singularity, not a cyclic quotient singularity  
and not Gorenstein.
If $R$ is nearly Gorenstein, then $D$ is one of the followings. 
\begin{enumerate}
\item $1/2 P_1 + 1/2 P_2 - \dfrac{ks - (k-1)}{(k+1) s - k} P_3$ for some $k\ge 0$ and $s\ge 3$.  
\\[1mm]
\item $1/2 P_1 + 2/3 P_2 - 1/3 P_3$
\item $1/2 P_1 + 2/3 P_2 - 2/3 P_3$
\item $1/2 P_1 + 1/3 P_2 - 1/4 P_3$
\item $1/2 P_1 + 2/3 P_2 - 1/4 P_3$
\item $1/2 P_1 + 2/3 P_2 - 3/4 P_3$
\item $1/2 P_1 + 1/3 P_2 - 1/5 P_3$
\item $1/2 P_1 + 1/3 P_2 - 3/5 P_3$
\item $1/2 P_1 + 2/3 P_2 - 1/5 P_3$
\item $1/2 P_1 + 2/3 P_2 - 3/5 P_3$
\item $1/2 P_1 + 2/3 P_2 - 4/5 P_3$
\end{enumerate}
\end{thm}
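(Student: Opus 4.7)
The plan is to translate the numerical criterion of \thmref{Main2} into conditions on the star-shaped dual graph of the minimal resolution, enumerate the surviving graphs, and then read off the corresponding Pinkham-Demazure divisor.

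First I recall that since $R=R(\PP^1,D)$ is a non-cyclic quotient singularity, its minimal resolution $X \to \Spec R$ has a star-shaped dual graph with a central smooth rational curve $E_0$ of self-intersection $-b_0 \le -2$ and $r \ge 3$ disjoint arms $B_1,\dots,B_r$ attached to $E_0$; each arm is a chain whose Hirzebruch-Jung continued fraction recovers the corresponding fractional term of $D$. The graph consists only of $(-2)$-curves precisely in the Gorenstein cases $\widetilde D_n, \widetilde E_6, \widetilde E_7, \widetilde E_8$, which are excluded by hypothesis. By \thmref{Main2}, $R$ is nearly Gorenstein iff $ZE_i \le E_i^2 + 2$ for every $E_i$ with $E_i^2 \le -3$.

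I would then analyze this inequality vertex by vertex. Writing $z_j$ for the multiplicity of a vertex in the fundamental cycle $Z$, the anti-nef condition along an arm gives the concavity-type recursion
\[
z_{j-1} - b_j z_j + z_{j+1} \le 0,
\]
with near-equality forced by the minimality of $Z$. A careful local analysis shows that each arm can contain at most one non-$(-2)$-curve, and the position of this curve within the arm is tightly restricted. Combining with the central inequality $-b_0 z_0 + \sum_i z_1^{(i)} \le -b_0 + 2$, where $z_1^{(i)}$ denotes the multiplicity of the vertex of $B_i$ adjacent to $E_0$, rules out $r \ge 4$ outside the Gorenstein $\widetilde D_n$ family, forces $b_0 \in \{2,3\}$, and leaves $r=3$ with two short arms of prescribed shape and a third arm of a restricted family.

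Translating each admissible star-shaped graph via the dictionary $p/q = b_1 - 1/(b_2 - 1/(\cdots - 1/b_\ell))$ between Hirzebruch-Jung continued fractions and the fractional coefficients of $D$ then recovers exactly the eleven families (1)--(11). The main obstacle is the identification of the infinite family (1): one must verify that the arms of shape $[2,\dots,2,s,2,\dots,2]$, with $k$ leading $(-2)$-curves and the unique non-$(-2)$-curve of self-intersection $-s$ at the prescribed position, are exactly those producing a nearly Gorenstein graph, and that the continued fraction of such an arm equals $((k+1)s-k)/(ks-(k-1))$. The remaining isolated cases (2)--(11) come from finitely many short-arm configurations and can be enumerated directly once the allowed arm shapes and central self-intersection are pinned down.
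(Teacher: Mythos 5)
Your overall strategy --- translate the numerical criterion of Theorem \ref{Main2} into conditions on the star-shaped graph, enumerate the surviving graphs, and convert back to $D$ via Hirzebruch--Jung continued fractions --- is reasonable in outline, but the central enumeration step has a genuine gap: the nearly Gorenstein criterion together with star-shapedness does \emph{not} cut the graphs down to the eleven families, and your claimed deductions (``rules out $r\ge 4$'', ``forces $b_0\in\{2,3\}$'', ``two short arms of prescribed shape'') are not consequences of that criterion. Concretely, take the star-shaped graph with central $(-2)$-curve $E_0$ and three arms each consisting of a single $(-3)$-curve $E_1,E_2,E_3$. Here $\Z=2E_0+E_1+E_2+E_3$ and $\Z E_i=-1=E_i^2+2$ for $i=1,2,3$, so condition (3) of Theorem \ref{Main2} holds and the singularity is rational and nearly Gorenstein; but $(q_1,q_2,q_3)=(3,3,3)$ is not a Platonic triple, so this is not a quotient singularity and is (correctly) absent from the list. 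The same happens for $(2,4,4)$, for the type $D$, $E_6$, $E_7$, $E_8$ graphs of Theorem \ref{Main3} with arbitrary decorations of the $\bullet$-vertices, and for Example \ref{NonLT}, which even has four or more branches and central self-intersection $\le -3$. What is missing is the point where the quotient hypothesis enters beyond star-shapedness: one must use that a two-dimensional quotient singularity is log-terminal, so that $r=3$ and $(q_1,q_2,q_3)\in\{(2,2,n),(2,3,3),(2,3,4),(2,3,5)\}$. Only this Platonic restriction reduces the a priori infinite list of nearly Gorenstein star-shaped graphs to family (1) plus the ten sporadic cases; no amount of local analysis of the anti-nef recursion can substitute for it.

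For comparison, the paper does not redo a vertex-by-vertex analysis at all: it quotes the classification of log-terminal graded rings to obtain the Platonic triples, observes that $\deg E\ge 3$ makes $\Z$ reduced (hence, for a non-chain graph, not nearly Gorenstein by Theorem \ref{t:nGrat} and Proposition \ref{p:redchain}), so $\deg E=2$, and then uses the fact that quotient singularities have almost reduced fundamental cycle to apply the already-proved Theorem \ref{ARnG}; the eleven entries are exactly the graphs of that theorem compatible with a Platonic triple, rewritten through the continued-fraction dictionary. Your statement of that dictionary for family (1) is essentially right, except that the unique non-$(-2)$-curve of self-intersection $-s$ must sit at the tip of the long arm, not at an arbitrary interior position, to produce $\frac{ks-(k-1)}{(k+1)s-k}$. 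If you want to keep your direct approach, insert the log-terminality constraint explicitly before the enumeration and then carry out the case analysis in full; as written, the ``careful local analysis'' you appeal to cannot produce the stated conclusion.
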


\par 
Moreover, we show $\ell_A(A/\Tr_A(K_A)) \le e(A)-1$ for 
any quotient singularity $A$; see Proposition \ref{res-quot}. 
Note that $\ell_A(A/\Tr_A(K_A))$ is not bounded above even if
we fix $e(A)$ in general; see Examples \ref{MYTypeA}, \ref{e=4}.

\section{Cycles on a resolution space}

In this section, we set up the notation used throughout the paper and introduce the basic terminology concerning cycles on a resolution of a two-dimensional 
normal singularity.

\begin{noname}\label{notation} (1)
Let $(A, \m, k)$ be a two-dimensional normal local domain containing an algebraically closed field isomorphic to the residue field $k$ and let $K_A$ denote a canonical module of $A$.
Also, we suppose that there exists a resolution of singularities 
  $\pi\:X \to \spec (A)$. Let $E:=\pi^{-1}(\m)$ be the exceptional 
set of $\pi$ 
 and $E=\bigcup_{i=1}^n E_i$ the decomposition into the irreducible components.
Let $K_X$ denote the canonical divisor on $X$.

\par 
(2) The support of a divisor $D$ on $X$ is denoted by $\supp(D)$.
A divisor $D$ on $X$  with $\supp(D)\subset E$ is called a {\em cycle}.
We may consider a cycle $D>0$ as a scheme with structure sheaf $\cO_D := \cO_X/\cO_X(-D)$.
We write $\chi(C)=\chi(\cO_C):=h^0(\cO_C)-h^1(\cO_C)$, where $h^i(\cF)$ denotes $\dim_k H^i(\cF)$.

\par 
(3) Let $L$ be a divisor on $X$.
We say that  $L$ is {\em nef} if $LE_i \ge 0$ for all $E_i \subset E$.
The divisor $L$ is said to be {\em anti-nef} if $-L$ is nef.
It is known that any anti-nef cycle is effective.
\end{noname}

\begin{defn}\label{d:numequiv}
For $\Q$-divisors $L_1$ and $L_2$ on $X$, we say that $L_1$ and $L_2$ are {\em numerically equivalent} if $(L_1-L_2)E_i=0$ for every $E_i \subset E$, and write as $L_1\equiv L_2$.
\end{defn}

By the Riemann-Roch formula, we have 
\begin{equation}
\label{eq:RR}
\chi(C)=-C(C+K_X)/2,
\
\chi(\cO_C(L))=\chi(C)+LC.
\end{equation}
From the first equality, for cycles $C>0$ and $D>0$,  we have 
\begin{equation}
\label{eq:chiadd}
\chi(C+D)=\chi(C) + \chi(D) -C D.
\end{equation}

\begin{defn}
 There exists a minimum $Z_f$ among the anti-nef cycles $D>0$; 
the cycle $\Z$ is called the {\em fundamental cycle} (\cite{artin.rat}).
The {\em fundamental genus} $p_f(A)$ of $A$ is defined by $p_f(A)=1-\chi(Z_f)$, and the {\em geometric genus} $p_g(A)$ of $A$ is defined by $p_g(A)=h^1(\cO_X)$; these are independent of the choice of the resolution and satisfy $p_f(A) \le p_g(A)$.
While the arithmetic genus is determined by the resolution graph, the geometric genus is not. 
The singularity $A$ is said to be {\em rational} if $p_g(A)=0$.
It is known that $A$ is a rational singularity if and only if $p_f(A)=0$ (see \cite{artin.rat}).
\end{defn}

\par 
The fundamental cycle $Z_f$ can be computed 
via a sequence $\{C_1, \dots, C_m\}$ of cycles such that
\begin{equation}\label{eq:compseq}
 C_1=E_{j_1}, \quad C_i=C_{i-1}+E_{j_i} \ (1< i \le m), \quad C_m=\Z,
\end{equation}
where $E_{j_1}$ is an arbitrary component of $E$ and $C_{i-1}E_{j_i}>0$ 
for $1< i \le m$.
Such a sequence $\{C_i\}_i$ is called a  
 {\em computation sequence} for $\Z$. 
From \eqref{eq:chiadd}, we have 
\begin{equation}
\label{eq:seqchi}
\chi(C_i)=\chi(C_{i-1})+\chi(E_{j_i})- C_{i-1}E_{j_i}  \ (1< i \le m).
\end{equation}
\par 
We can verify whether $A$ is a rational singularity as follows.
\begin{prop}\label{p:ratcompseq}
Assume that each $E_i$ is a nonsingular rational curve, and let $\{C_1, \dots, C_m\}$ be a computation sequence as in \eqref{eq:compseq}.
Then, $A$ is rational if and only if $C_{i-1}E_{j_i}=1$ for $1<i\le m$.
\end{prop}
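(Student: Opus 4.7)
The plan is to use the Riemann–Roch style recursion \eqref{eq:seqchi} together with the hypothesis that all $E_i$ are smooth rational curves, and then invoke the characterization $A$ rational $\Longleftrightarrow p_f(A)=0$ stated in the preceding definition.

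First I would compute $\chi(E_{j_i})$ for each $i$. Because each exceptional component is a smooth rational curve, $\chi(\cO_{E_{j_i}})=1-g(E_{j_i})=1$. Substituting into \eqref{eq:seqchi} gives the clean recursion
\[
\chi(C_i)=\chi(C_{i-1})+1-C_{i-1}E_{j_i}\qquad (1<i\le m),
\]
with initial value $\chi(C_1)=\chi(E_{j_1})=1$. Since $\{C_i\}$ is a computation sequence, each integer $C_{i-1}E_{j_i}$ is strictly positive, hence $\ge 1$. Iterating the recursion yields
\[
\chi(\Z)=\chi(C_m)=1-\sum_{i=2}^{m}\bigl(C_{i-1}E_{j_i}-1\bigr),
\]
and every summand on the right is a non-negative integer.

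From this identity the equivalence is immediate. In one direction, if $C_{i-1}E_{j_i}=1$ for all $1<i\le m$, then $\chi(\Z)=1$, so $p_f(A)=1-\chi(\Z)=0$, and hence $A$ is rational. Conversely, if $A$ is rational, then $\chi(\Z)=1$, so the sum of the non-negative integers $C_{i-1}E_{j_i}-1$ vanishes; each term must therefore be zero, which is exactly the assertion $C_{i-1}E_{j_i}=1$ for every $i>1$.

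The only thing to verify carefully is the computation $\chi(\cO_{E_{j_i}})=1$, which uses that $E_{j_i}$ is irreducible, reduced, smooth and rational; after that the argument is purely formal. The main obstacle, if any, is just making sure one uses Riemann–Roch in the form \eqref{eq:chiadd}/\eqref{eq:seqchi} and not directly on $\Z$, since we do not a priori know $\chi(\Z)$; the whole point of the computation sequence is to assemble $\chi(\Z)$ one component at a time from the known value $\chi(\cO_{E_i})=1$ and the intersection numbers.
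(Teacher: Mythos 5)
Your proof is correct and follows essentially the same route as the paper: both rest on the recursion \eqref{eq:seqchi} applied along the computation sequence, using $\chi(\cO_{E_{j_i}})=1$ for the smooth rational components and the positivity $C_{i-1}E_{j_i}>0$ built into the definition of a computation sequence. The only cosmetic difference is that you close the argument via the stated equivalence ``$A$ rational $\Leftrightarrow p_f(A)=1-\chi(\Z)=0$,'' whereas the paper's one-line proof instead cites Laufer's fact that $h^0(\cO_{C_i})=1$; your telescoping identity delivers the same conclusion either way.
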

\begin{proof}
It follows from \eqref{eq:seqchi} and the fact that $h^0(\cO_{C_i})=1$ for $1\le i \le m$ 
(see \cite[(2.6)]{La.me}).
\end{proof}

\begin{defn}
Assume that $X$ is a {\em good resolution}, that is, $E$ is a simple normal crossings divisor. 
Then $E$ or its weighted dual graph is said to be {\em star-shaped} if it has at most one irreducible component $E_0$ of $E$ such that $(E-E_0)E_0\ge 3$ and any component of $E-E_0$ is a rational curve.
In this case, $E_0$ is called a {\em central curve} and each connected component of $E-E_0$ a {\em branch}.
\end{defn}

It is known that any resolution of a rational singularity is a good resolution and that if $A$ is the localization of a graded ring by the homogeneous maximal ideal and $X$ is the minimal good resolution, then $E$ is star-shaped.

\begin{defn}
A cycle $F \ge 0$ is called the {\em fixed part} of the divisor $L$ or the invertible
 sheaf $\cO_X(L)$ on $X$ if $F$ is the maximal cycle such that $H^0(\cO_X(L-F))=H^0(\cO_X(L))$.  
We say that $L$ {\em has no fixed components} if its fixed part is zero.
 We say that $L$ is {\em generated} if $\cO_X(L)$ is generated by $H^0(\cO_X(L))$.
Obviously, $L$ has no fixed component if $\cO_X(L)$ is generated.
\end{defn}

\begin{rem} \label{r:fcomp}
From the definition, we easily see the following.
\begin{enumerate}
\item $L$ is nef if $L$ has no fixed component.
\item For $f\in H^0(\cO_X(L))\setminus\{0\}$, there exists an effective divisor $H_f$ on $X$ such that $\di_X(f)+L=H_f$. 
Then the fixed part is the greatest divisor that is less than or equal to any member of $\defset{H_f}{f\in H^0(\cO_X(L))\setminus\{0\}}$.
\item If $C>0$ is a cycle, then $\cO_X(-C)$ has no fixed components if and only if there exists an element $h\in H^0(\cO_X(-C))$ such that $\di_X(h)=C+H$, where $H$ does not contain any component of $E$.
\end{enumerate}
\end{rem}

\section{The canonical trace ideals of rational singularities}

\par 
In this section, for rational singularities, we provide a description of the trace of $A$-modules represented by divisors on a resolution $X$, as well as the canonical trace ideals.

\par 
We use the notation introduced in the preceding section.
Note that $A$ is rational if and only if $K_A=H^0(\cO_X(K_X))$ (cf. \cite[Theorem 3.4 and 3.5]{la.rat}) and that $X \to \spec(A)$ is the minimal resolution if and only if the canonical divisor $K_X$ is nef.
Moreover, $A$ is a rational Gorenstein singularity (or, rational double point) if and only if $K_X\equiv 0$.
\par 
Let $L$  be a divisor on $X$.

\begin{lem} \label{l:L*}
Assume that $L$ has no fixed components. 
Then 
\[
H^0(\cO_X(L))^{-1} = H^0(\cO_X(-L)).
\]
\end{lem}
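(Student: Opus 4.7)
The plan is to establish this equality of fractional $A$-ideals by proving each of the two inclusions, carried out via comparison of orders of vanishing at every prime divisor of $X$.

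For the easy direction $H^0(\cO_X(-L)) \subseteq H^0(\cO_X(L))^{-1}$: given $g$ with $\di(g) \geq L$ and any $f \in H^0(\cO_X(L))$ (so $\di(f) \geq -L$), additivity of principal divisors yields $\di(gf) \geq 0$, so $gf \in H^0(X,\cO_X) = A$, and thus $g \cdot H^0(\cO_X(L)) \subseteq A$.

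For the reverse inclusion, let $g \in H^0(\cO_X(L))^{-1}$. The goal is to verify $\ord_V(g) \geq \ord_V(L)$ at every prime divisor $V$ on $X$. The strategy is: for each $V$, produce a section $f_V \in H^0(\cO_X(L))$ realizing the minimum $\ord_V(f_V) = -\ord_V(L)$; then $gf_V \in A$ forces $\ord_V(g) \geq \ord_V(L)$. When $V = E_i$ is an exceptional component, the ``no fixed components'' hypothesis is precisely the statement $H^0(\cO_X(L - E_i)) \subsetneq H^0(\cO_X(L))$, so a section not vanishing to any extra order at $E_i$ exists. When $V$ is non-exceptional, $\cO_{X,V} = A_{\frp_V}$ is a DVR and $\pi$ is an isomorphism over $X \setminus E$; in the cases where the lemma is applied (with $L$ supported on $E$), Hartogs extension forces $H^0(\cO_X(L)) \subseteq A$, so this ideal is either $A$ or an $\m$-primary ideal, hence not contained in any height-one prime $\frp_V$, and prime avoidance yields $f_V \in H^0(\cO_X(L))$ with $\ord_V(f_V) = 0 = -\ord_V(L)$.

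The main obstacle is the non-exceptional case, for which no direct analogue of ``no fixed components'' is imposed by the hypothesis; in the applications of interest this is bypassed by the $\m$-primary/prime-avoidance argument above, while for a fully general $L$ one would need to invoke local triviality of the line bundle $\cO_X(L)$ at $V$ together with a lifting argument from the DVR $A_{\frp_V}$ to global sections, using the normality of $A$ in codimension one.
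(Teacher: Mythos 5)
Your proof is correct and follows essentially the same route as the paper's: the easy inclusion is identical, and the reverse inclusion is obtained by pairing $g$ against sections $f$ whose divisors $\di_X(f)+L=H_f$ realize the minimal order of vanishing along each component, which is exactly the paper's ``since $L$ has no fixed components, moving $f$, we have $\di_X(g)-L\ge 0$.'' The only difference is one of bookkeeping: you handle the non-exceptional prime divisors explicitly (via $\m$-primariness and prime avoidance), whereas the paper absorbs that point into its characterization of the fixed part as the greatest divisor below all the $H_f$ in Remark~\ref{r:fcomp}~(2).
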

\begin{proof}
If $g\in H^0(\cO_X(-L))$, then $g H^0(\cO_X(L)) \subset H^0(\cO_X(-L+L)) =A$.
Therefore, $H^0(\cO_X(-L)) \subset H^0(\cO_X(L))^{-1}$.
\par 
Let us show that $H^0(\cO_X(L))^{-1} \subset H^0(\cO_X(-L))$.
Assume that $g\in H^0(\cO_X(L))^{-1}$.
For any $f\in H^0(\cO_X(L))\setminus\{0\}$, let $H_f$ denote the divisor as in \remref{r:fcomp} (2).
Since $g H^0(\cO_X(L)) \subset A$, we have 
\[
\di_X(g) + \di_X(f) = \di_X(g) - L +H_f \ge 0 \ \text{ for }
\ 
\forall f\in H^0(\cO_X(L))\setminus\{0\}.
\]
Since $L$ has no fixed components, moving $f$, we have the inequality $\di_X(g) - L \ge 0$. 
Hence we obtain that $g\in H^0(\cO_X(-L))$.
\end{proof}

\par 
The good properties of rational singularities that we use in this section are the following.

\begin{lem}[cf. {\cite{artin.rat}, \cite[4.17]{chap}}]
\label{l:invrat}
Assum that $A$ is a rational singularity.
Then, we have the following.
\begin{enumerate}
\item $\m\cO_X=\cO_X(-\Z)$.
\item $L$ is nef if and only if it is generated$;$ if this is the case, $H^1(\cO_X(L))=0$.
\end{enumerate}
\end{lem}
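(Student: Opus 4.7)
My plan is to prove (2) first, with the nontrivial direction being \textbf{nef $\Rightarrow$ generated} (which delivers $H^1(\cO_X(L))=0$ as a byproduct), and then to deduce (1). The reverse direction of (2) is immediate: a globally generated invertible sheaf has non-negative degree on every proper curve, so $LE_i\ge 0$ for every component $E_i$ of $E$.

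For \textbf{nef $\Rightarrow$ generated}, I would induct along a computation sequence $\{C_1,\ldots,C_m=\Z\}$ as in \eqref{eq:compseq}. Rationality plays the decisive role via \proref{p:ratcompseq}: each $E_{j_i}$ is a smooth rational curve and $C_{i-1}E_{j_i}=1$ for $i>1$. From the short exact sequences
\[
0 \to \cO_{E_{j_i}}(L - C_{i-1}) \to \cO_{C_i}(L) \to \cO_{C_{i-1}}(L) \to 0,
\]
combined with $LE_{j_i}\ge 0$ and $E_{j_i}\cong\PP^1$, one inductively obtains $H^1(\cO_{C_i}(L))=0$ and that $\cO_{C_i}(L)$ is globally generated. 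A parallel induction on the thickenings $n\Z$ ($n\ge 1$) propagates these conclusions, and the theorem on formal functions then yields $H^1(\cO_X(L))=0$ together with the surjection $H^0(\cO_X(L))\to H^0(\cO_{\Z}(L))$, from which global generation on $X$ follows.

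For (1), granting (2): for any $f\in\m$, the divisor $\di_X(f)$ is principal and effective, so its exceptional part $D_f$ satisfies $D_f\cdot E_i\le 0$ for every $i$ (the strict-transform piece meets $E_i$ non-negatively), and $D_f>0$ because $\pi^*f$ vanishes along every component of $E=\pi^{-1}(\m)$. Thus $D_f$ is anti-nef and positive, forcing $D_f\ge \Z$, so $f\in H^0(\cO_X(-\Z))$. This yields $\m\cO_X\subseteq \cO_X(-\Z)$. Conversely, $-\Z$ is nef by definition, so (2) makes $\cO_X(-\Z)$ generated by its sections in $H^0(\cO_X(-\Z))\subseteq \m$ (each such section vanishes at the closed point of $\Spec A$), giving $\cO_X(-\Z)\subseteq \m\cO_X$; equality follows.

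The main obstacle will be the passage from the infinitesimal vanishing and generation on the thickenings $n\Z$ to the corresponding statements on $X$, where the theorem on formal functions (or a direct argument using $p_g(A)=0$) is essential. Everything else is bookkeeping guided by the rigid structure of computation sequences supplied by \proref{p:ratcompseq}.
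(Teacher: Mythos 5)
The paper gives no proof of this lemma at all: it is quoted as a known result with references to Artin and to Reid's \emph{Chapters on algebraic surfaces} (4.17), so there is no internal argument to compare yours against. What you have written is essentially a reconstruction of the classical Artin--Laufer--Reid proof, and the overall strategy is sound. Your deduction of (1) from (2) is complete and correct as written, and the ``generated $\Rightarrow$ nef'' direction of (2) is immediate as you say.

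Two steps in the hard direction of (2) are thinner than your sketch suggests. First, global generation of $\cO_{C_i}(L)$ does not fall out of the displayed exact sequence alone: when $LE_{j_i}=0$ the kernel is $\cO_{E_{j_i}}(L-C_{i-1})\cong\cO_{\PP^1}(-1)$, which has no global sections, so it contributes nothing toward generating $\cO_{C_i}(L)$ at points of $E_{j_i}$. You must instead use that $\cO_{C_i}(L)$ surjects onto the degree-zero, hence trivial, bundle $\cO_{E_{j_i}}(L)$, and that a section of $\cO_{C_{i-1}}(L)$ nonvanishing at the point $E_{j_i}\cap\supp C_{i-1}$ (which exists because $C_{i-1}E_{j_i}=1$) lifts to a section whose restriction to $E_{j_i}$ is a nonzero constant; generation at points of $X$ lying off $E$ also deserves a sentence. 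Second, \proref{p:ratcompseq} only supplies computation sequences with unit intersection numbers for $\Z$ itself; for the thickenings the cleaner move is the exact sequence $0\to\cO_{\Z}(L-(n-1)\Z)\to\cO_{n\Z}(L)\to\cO_{(n-1)\Z}(L)\to 0$ together with the observation that $L-(n-1)\Z$ is again nef (since $-\Z$ is nef), reducing everything to the $n=1$ case for an arbitrary nef divisor. With those repairs the argument goes through and the surjection $H^0(\cO_X(L))\to H^0(\cO_{\Z}(L))$, obtained from $H^1(\cO_X(L-\Z))=0$ applied to the nef divisor $L-\Z$, finishes generation along $E$ exactly as you intend.
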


\begin{lem}
\label{l:2ndfund}
Assume that $A$ is a rational singularity, and let $L_1$ and $L_2$ be nef divisors on $X$.
 Then $H^0( \cO_X(L_2))\cdot  H^0(\cO_X(L_1)) = H^0( \cO_X(L_1+L_2)) $.
\end{lem}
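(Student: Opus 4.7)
The inclusion $H^0(\cO_X(L_1))\cdot H^0(\cO_X(L_2)) \subseteq H^0(\cO_X(L_1+L_2))$ follows immediately from the canonical isomorphism $\cO_X(L_1)\otimes_{\cO_X}\cO_X(L_2)\cong \cO_X(L_1+L_2)$, since the product of two sections is a section of the tensor product. The content is the reverse inclusion.

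Since $L_1$ is nef and $A$ is rational, $\cO_X(L_1)$ is generated by its global sections by \lemref{l:invrat}(2). Pick $u_0,\ldots,u_r\in H^0(\cO_X(L_1))$ giving a surjection $\cO_X^{r+1}\twoheadrightarrow \cO_X(L_1)$, and let $\cK$ denote its kernel. Since $X$ is a smooth surface, $\cK$ is locally free of rank $r$. Tensoring with the invertible sheaf $\cO_X(L_2)$ produces the exact sequence
\[
0\to \cK\otimes \cO_X(L_2)\to \cO_X(L_2)^{r+1}\to \cO_X(L_1+L_2)\to 0,
\]
whose associated long exact cohomology sequence reads
\[
\sum_{i=0}^r u_i\cdot H^0(\cO_X(L_2))\longrightarrow H^0(\cO_X(L_1+L_2))\longrightarrow H^1(\cK\otimes \cO_X(L_2)).
\]
The image of the first arrow is contained in $H^0(\cO_X(L_1))\cdot H^0(\cO_X(L_2))$, so the lemma will follow once one establishes the vanishing $H^1(\cK\otimes \cO_X(L_2))=0$.

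The cohomology vanishing $H^1(\cK\otimes \cO_X(L_2))=0$ is the crux and the main obstacle. To bypass the abstract kernel $\cK$, I would try to replace the $(r+1)$-term resolution by a divisor-theoretic one: choose a single generic section $u_0\in H^0(\cO_X(L_1))$ such that the effective divisor $D:=\di_X(u_0)+L_1$ contains no exceptional component, which is possible because $L_1$, being generated, has no fixed components. The short exact sequence
\[
0\to \cO_X(L_2)\xrightarrow{\cdot u_0}\cO_X(L_1+L_2)\to \cO_D(L_1+L_2)\to 0,
\]
combined with $H^1(\cO_X(L_2))=0$ from \lemref{l:invrat}(2), yields surjectivity of the restriction $H^0(\cO_X(L_1+L_2))\twoheadrightarrow H^0(\cO_D(L_1+L_2))$. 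The remaining task is to realize every section of $\cO_D(L_1+L_2)$ as the restriction of a product $u_i\cdot v$ with $v\in H^0(\cO_X(L_2))$. Because $D$ meets the exceptional set $E$ only at finitely many points, the analysis reduces essentially to the regular locus of $\spec A$, where $\cO_X(L_1)$ and $\cO_X(L_2)$ are locally trivial and the multiplicative behaviour of sections is transparent; the delicate step will be to make this reduction rigorous and glue the local product expressions into global ones, presumably via a further application of \lemref{l:invrat}(2) and the vanishing statements it provides for nef restrictions.
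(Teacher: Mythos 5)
Your argument is not complete: both routes you outline terminate at an unproved key step, and you say so yourself. In the first route everything is reduced to the vanishing $H^1(\mathcal{K}\otimes \cO_X(L_2))=0$ for the rank-$r$ kernel $\mathcal{K}$ of the evaluation map $\cO_X^{r+1}\twoheadrightarrow\cO_X(L_1)$; this is a genuinely higher-rank vanishing that does not follow from \lemref{l:invrat} (which only controls nef line bundles), and no argument for it is offered. In the second route, the restriction sequence shows only that $H^0(\cO_X(L_1+L_2))$ is generated by $u_0H^0(\cO_X(L_2))$ together with lifts of $H^0(\cO_D(L_1+L_2))$; the assertion that every section of $\cO_D(L_1+L_2)$ is the restriction of a product is essentially the lemma itself localized to $D$, and deferring it to an unspecified gluing over the regular locus of $\spec A$ is not a proof.

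The missing idea is to use a \emph{second} general section, one from each bundle, and the resulting Koszul complex --- this is what the paper does. Since $\cO_X(L_1)$ and $\cO_X(L_2)$ are generated by \lemref{l:invrat}(2), general $f_1\in H^0(\cO_X(L_1))$ and $f_2\in H^0(\cO_X(L_2))$ have zero divisors $H_{f_1}$, $H_{f_2}$ with no common point: they contain no component of $E$ and meet $E$ in disjoint finite sets for general choices, while off $E$ they are cut out by two general elements of fractional ideals of the two-dimensional normal domain $A$, hence meet only over $\m$. Consequently
\[
0\to \cO_X \xrightarrow{(f_1,f_2)} \cO_X(L_1)\oplus\cO_X(L_2)\to \cO_X(L_1+L_2)\to 0
\]
is exact, and $H^1(\cO_X)=0$ (rationality) yields $H^0(\cO_X(L_1+L_2))=f_1H^0(\cO_X(L_2))+f_2H^0(\cO_X(L_1))$, which lies in the product. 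This also shows exactly how your second route closes: take $f_2$ nowhere vanishing on $D=H_{f_1}$, so that multiplication by $f_2|_D$ identifies $\cO_D(L_1)$ with $\cO_D(L_1+L_2)$, and the surjectivity of $H^0(\cO_X(L_1))\to H^0(\cO_D(L_1))$ follows from $H^1(\cO_X)=0$ applied to $0\to\cO_X\xrightarrow{f_1}\cO_X(L_1)\to\cO_D(L_1)\to 0$. Without this second section your reduction to $D$ cannot be completed.
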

\begin{proof}
Take general elements $f_1 \in H^0(\cO_X(L_1))$ and 
$f_2\in H^0(\cO_X(L_2))$.
By \lemref{l:invrat} (2), $\cO_X(L_1)$ and $\cO_X(L_2)$ are generated.
Thus we have the exact sequence
\[
0 \to \cO_X \xrightarrow {(f_1,f_2)}
 \cO_X(L_1) \oplus
\cO_X(L_2)
\xrightarrow{\genfrac{(}{)}{0pt}{}{-f_2}{f_1}}
\cO_X(L_1+L_2) \to 0.
\]
Since $h^1(\cO_X)=0$, 
 we obtain
\[
 H^0( \cO_X(L_1+L_2)) = f_1 H^0( \cO_X(L_2)) + f_2 H^0(\cO_X(L_1)).
\]
Hence the assertion follows.
\end{proof}

Now we can compute the trace ideal of a nef divisor. 
\begin{thm}\label{p:rat-tr}
Assume that $A$ is a rational singularity and $L$ is nef.
Let $F$ be the fixed part of $-L$.
Then $-F$ is nef and $\tr H^0(\cO_X(L)) = H^0(\cO_X(-F))$.
Moreover, if $L\not\equiv 0$, $F$ can be computed by a computation sequence $\{C_1=\Z, \dots, C_m=F\}$ of cycles such that 
\[
C_i=C_{i-1}+E_{j_i}, \ (L+C_{i-1})E_{j_i} >0 \ \ (1<i\le m).
\]
\end{thm}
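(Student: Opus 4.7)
The plan is to combine the identity $\tr_A I = I\cdot I^{-1}$ with the two preparatory lemmas. Since $L$ is nef, \lemref{l:invrat}(2) says $\cO_X(L)$ is generated by its global sections; in particular $L$ has no fixed components, and \lemref{l:L*} gives $H^0(\cO_X(L))^{-1} = H^0(\cO_X(-L))$. Hence
\[
\tr_A H^0(\cO_X(L)) = H^0(\cO_X(L)) \cdot H^0(\cO_X(-L)).
\]

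Let $F$ be the fixed part of $-L$, so that $H^0(\cO_X(-L-F)) = H^0(\cO_X(-L))$. The key step is to show $-L-F$ is nef: removing the fixed part from $|{-L}|$ produces the globally generated invertible sheaf $\cO_X(-L-F)$, and by \lemref{l:invrat}(2) generation is equivalent to nefness in the rational case. Consequently $-F = L + (-L-F)$ is a sum of nef divisors and hence nef. Applying \lemref{l:2ndfund} to the nef pair $(L, -L-F)$ gives
\[
H^0(\cO_X(L)) \cdot H^0(\cO_X(-L-F)) = H^0(\cO_X(-F)),
\]
and combining with the fixed-part equation yields the first assertion $\tr_A H^0(\cO_X(L)) = H^0(\cO_X(-F))$.

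For the computation-sequence description, assume $L\not\equiv 0$. First, $F \ge \Z$: any $g \in H^0(\cO_X(-L))$ lies in $\m = H^0(\cO_X(-\Z))$ by \lemref{l:invrat}(1), so $\Z$ is a fixed component of $-L$. I would then prove by induction that every cycle $C_i$ produced by the procedure satisfies $C_i \le F$. Indeed, if $C_{i-1}\le F$ and $(L+C_{i-1})E_{j_i} > 0$, the assumption $\mult_{E_{j_i}}(C_{i-1}) = \mult_{E_{j_i}}(F)$ would make $F-C_{i-1}$ an effective divisor not containing $E_{j_i}$, so $(F-C_{i-1})E_{j_i} \ge 0$ and
\[
(L+F)E_{j_i} = (L+C_{i-1})E_{j_i} + (F-C_{i-1})E_{j_i} > 0,
\]
contradicting the nefness of $-L-F$ established above. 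The sequence therefore stays below $F$ and must terminate at some $C_m \le F$ with $-L-C_m$ nef. At termination, $H^0(\cO_X(-L-C_m)) \subset H^0(\cO_X(-L)) = H^0(\cO_X(-L-F))$ by the fixed-part identity, while $C_m \le F$ gives the reverse inclusion, so the invertible sheaves $\cO_X(-L-C_m)$ and $\cO_X(-L-F)$, both nef and hence generated by \lemref{l:invrat}(2), have the same global sections. Generation then forces them to coincide as subsheaves of $\cO_X(-L)$, giving $C_m = F$.

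The main obstacle I anticipate is the bookkeeping needed to identify the algebraic fixed part with the numerical ``minimal $F \ge \Z$ such that $-L-F$ is nef'', in particular the termination argument above. The key leverage at every turn is \lemref{l:invrat}(2), which for rational singularities equates \emph{generated} with \emph{nef} and thereby allows free passage between the sheaf-theoretic and numerical viewpoints.
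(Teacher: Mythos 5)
Your overall route coincides with the paper's: use $\tr_A I = I\cdot I^{-1}$, \lemref{l:L*}, the fixed-part identity $H^0(\cO_X(-L))=H^0(\cO_X(-L-F))$, and \lemref{l:2ndfund} applied to the nef pair $L$, $-L-F$; and for the second assertion, a Laufer-type computation sequence climbing to the minimal cycle making $L+C$ anti-nef. The first part and the induction/termination argument for the computation sequence are fine (the paper states the latter in one line, so your more detailed bookkeeping is welcome).

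There is, however, one step that is genuinely wrong as written: your justification of $F\ge \Z$. You claim that every $g\in H^0(\cO_X(-L))$ lies in $\m$, but $L$ is an arbitrary nef divisor, not a cycle, and in the main application $L=K_X$ (normalized so that $K_A=H^0(\cO_X(K_X))\subset A$) one has $H^0(\cO_X(-K_X))=K_A^{-1}\supseteq A\ni 1$, which is certainly not contained in $\m$. So ``$\Z$ is a fixed component of $-L$'' does not follow from \lemref{l:invrat}(1) in this way. The correct argument --- the one the paper uses, and for which you have already assembled all the ingredients --- is numerical: since $-F=L+(-L-F)$ is nef, $F$ is an anti-nef cycle; if $L\not\equiv 0$ then $F\ne 0$ (otherwise $-L=-L-F$ would be nef, forcing $L\equiv 0$); and every nonzero effective anti-nef cycle dominates $\Z$ by the minimality defining the fundamental cycle. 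A second, minor, imprecision: you assert that deleting the fixed part makes $\cO_X(-L-F)$ globally generated and then deduce nefness from \lemref{l:invrat}(2); the logic runs the other way. Deleting the fixed part only guarantees that $-L-F$ has no fixed components, which gives nefness directly by \remref{r:fcomp}(1), and only then does \lemref{l:invrat}(2) upgrade this to global generation. With these two repairs your proof is correct and matches the paper's.
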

\begin{proof}
Note that $H^0(\cO_X(-L))=H^0(\cO_X(-L-F))$.
Since $L$ and $-L-F$ are nef, $-F$ is nef and the equality follows from \cite[1.1]{HHS}, \lemref{l:L*} and \lemref{l:2ndfund}.
Assume that $L\not\equiv 0$. Then $F\ne 0$.
Since $F$ is anti-nef, we have $F\ge \Z$. 
Hence a sequence as in the assertion exists because  $F$ is the  minimal cycle such that $L+F$ is anti-nef.                       
\end{proof}

Applying this result to the case $L=K_X$, we get a geometric expression of the canonical trace 
ideal of $A$.
\begin{thm} \label{rat-cantr}
Assume that $A$ is a rational singularity, and let $K_A$ be a canonical module of $A$. 
Then $\Tr_A(K_A)$ is an integrally closed $\m$-primary ideal which is represented on the minimal resolution $X$ as follows: 
\[
\Tr_A(K_A)=H^0(X,\mathcal{O}_X(-F)),  
\]
where $F$ is the minimal cycle such that 
$F+K_X$ is anti-nef.
Note that $F \ge \Z$ if $K_X \not \equiv 0$ since $F$ is anti-nef. 
\end{thm}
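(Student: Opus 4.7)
The plan is to deduce this theorem as a direct specialization of Theorem~\ref{p:rat-tr} to $L = K_X$. Two ingredients justify the application: on the minimal resolution $X \to \Spec(A)$ the canonical divisor $K_X$ is nef (a consequence of minimality), and since $A$ is a rational singularity, Laufer's formula $K_A = H^0(X, \cO_X(K_X))$ (cited at the start of Section~3) provides exactly the module whose trace we wish to compute.

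Given these, Theorem~\ref{p:rat-tr} yields directly
\[
\Tr_A(K_A) \;=\; \tr_A H^0(\cO_X(K_X)) \;=\; H^0(\cO_X(-F)),
\]
where $F$ is the fixed part of $-K_X$ and, as noted in the closing line of the proof of Theorem~\ref{p:rat-tr}, also the minimal effective cycle for which $K_X + F$ is anti-nef. To establish $F \ge Z_f$ under the hypothesis $K_X \not\equiv 0$, I would invoke that $F$ is anti-nef (Theorem~\ref{p:rat-tr}); if $F$ were zero, then $K_X$ would be simultaneously nef and anti-nef, hence numerically trivial, contrary to assumption. So $F$ is a nonzero anti-nef cycle, and by the defining property of $Z_f$ one concludes $F \ge Z_f$.

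The remaining task is to verify that $\Tr_A(K_A) = H^0(\cO_X(-F))$ is integrally closed and $\m$-primary. Integral closedness follows from the classical Lipman-type principle: for any anti-nef cycle $D$ on a resolution of a two-dimensional rational singularity, the ideal $H^0(X, \cO_X(-D)) \subset A$ is integrally closed; since $F$ is anti-nef, this applies. For the $\m$-primary property, when $F = 0$ we are in the Gorenstein case with $\Tr_A(K_A) = A$; when $F > 0$, anti-nefness of $F$ together with connectedness of $E$ forces $\supp(F) = E$, so the vanishing scheme of $H^0(\cO_X(-F))$ inside $\Spec(A)$ is concentrated at $\{\m\}$, yielding $\m$-primariness.

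Overall there is no substantial new obstacle: the geometric computation of $\Tr_A(K_A)$ is entirely carried by Theorem~\ref{p:rat-tr}, and the auxiliary properties (integral closedness and $\m$-primariness) are standard consequences of the framework of Section~2 for rational surface singularities. The theorem is really a packaging of Theorem~\ref{p:rat-tr} in the canonical case, together with the observation that the fixed part of $-K_X$ automatically dominates $Z_f$ once $K_X$ is nontrivial.
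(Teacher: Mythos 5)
Your proposal is correct and follows exactly the paper's route: the paper derives Theorem~\ref{rat-cantr} as an immediate specialization of Theorem~\ref{p:rat-tr} to $L=K_X$, using that $K_X$ is nef on the minimal resolution and that $K_A=H^0(\cO_X(K_X))$ for a rational singularity. Your additional verifications of integral closedness and $\m$-primariness (via the standard correspondence between anti-nef cycles and integrally closed ideals) are routine facts the paper leaves implicit, and your argument for $F\ge Z_f$ matches the paper's.
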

\section{Criteria to be nearly Gorenstein}

In this section, we give a criterion for rational surface singularities 
to be nearly Gorenstein. 
In the following two sections, by using this criterion, we will 
provide several examples of nearly Gorenstein rational singularities, as well as a classification of nearly Gorenstein  singularities within the classes of
 almost reduced  (Definition \ref{AR-defn})
 rational singularities and quotient singularities.
\par \vspace{2mm}
In what follows, assume that $A$ is a rational singularity and $X \to \spec(A)$ is the minimal resolution.

\begin{nota}\label{n:F*}
We use the following notation.
\begin{enumerate}
\item Let $F$ denote the fixed part of $-K_X$ and 
let $\Z$ be the fundamental cycle.
\item For a cycle $C=\sum_{i=1}^nc_iE_i$, let $\cff_{E_i}(C)=c_i$.
\item Let $E_j^*$ denote the element of $\sum_{i=1}^n \Q E_i$ that satisfies $E_j^* E_i = -\delta_{ij}$.
\end{enumerate}
Note that $F$ is anti-nef by \proref{p:rat-tr} and that for any $\Q$-divisor $L$ on $X$, we have $L\equiv \sum_{i=1}^n(-LE_i)E_i^*$.
\end{nota}

\par 
We provide several characterization of nearly Gorenstein rational singularities in terms of combinatorial conditions.

\begin{thm}\label{t:nGrat}
Assume that $A$ is a rational singularity, but not Gorenstein.
Then the following conditions are equivalent.
\begin{enumerate}
\item $A$ is nearly Gorenstein.
\item $\Z=F$.
\item $K_X+\Z$ is anti-nef. 
\item $\Z$ satisfies one of the following:
 \begin{enumerate}
 \item $E$ is irreducible.
 \item There exists $E_{i_0}\subset E$ such that $\cff_{E_{i_0}}(\Z)=2$ and  
\[
(\Z - E_{i_0})E_{i_0} = 1, \ \ 
(\Z - E_{i})E_{i} = 2 \ (i\ne i_0)
\]
 \item There exist $E_{i_1}, E_{i_2} \subset E$ such that 
 $\cff_{E_{i_1}}(\Z)=\cff_{E_{i_2}}(\Z)=1$ and  
\[
(\Z - E_{i})E_{i} = 1  \ (i\in \{i_1, i_2\}), \ \ 
(\Z - E_{i})E_{i} = 2 \ (i\not\in \{i_1, i_2\})
\]
 \end{enumerate}
\item  For every component $E_i$ with $E_i^2\le -3$, we have $\Z E_i \le E_i^2 + 2$.
\end{enumerate}
\end{thm}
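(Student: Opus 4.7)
The plan is to prove (1)$\Leftrightarrow$(2)$\Leftrightarrow$(3) as a chain using \thmref{rat-cantr} and \proref{p:rat-tr}, then derive (5) from (3) by a direct adjunction calculation, and finally derive (4) from (3) through a Riemann--Roch constraint on the coefficients of $\Z$.

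For (1)$\Leftrightarrow$(2), \thmref{rat-cantr} and \lemref{l:invrat}(1) give $\Tr_A(K_A)=H^0(\cO_X(-F))$ and $\m=H^0(\cO_X(-\Z))$; since $F\ge \Z$, we always have $\Tr_A(K_A)\subset \m$, with equality exactly when $F=\Z$. For the nontrivial direction, when $F>\Z$ pick $E_j$ with $\cff_{E_j}(F)>\cff_{E_j}(\Z)$; because $-\Z$ is nef, $\cO_X(-\Z)$ is generated by \lemref{l:invrat}(2) and so has no fixed components, meaning there exists $g\in \m$ with $\ord_{E_j}(g)=\cff_{E_j}(\Z)<\cff_{E_j}(F)$, witnessing $\m\not\subset \Tr_A(K_A)$. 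The step (2)$\Leftrightarrow$(3) is then immediate: by \proref{p:rat-tr}, $F$ is the minimal cycle making $F+K_X$ anti-nef and $F\ge \Z$, so $F=\Z$ iff $\Z+K_X$ is itself anti-nef.

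For (3)$\Leftrightarrow$(5), adjunction on the nonsingular rational curve $E_i$ gives $K_X E_i=-2-E_i^2$, so $(K_X+\Z)E_i\le 0$ reads $\Z E_i\le E_i^2+2$. When $E_i^2=-2$ this inequality is $\Z E_i\le 0$, which is automatic from anti-nefness of $\Z$; hence condition (3) reduces to the condition in (5) on curves with $E_i^2\le -3$.

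For (3)$\Leftrightarrow$(4), write $\Z=\sum c_iE_i$ and set $a_i:=-(\Z+K_X)E_i$, so condition (3) is $a_i\ge 0$ for all $i$. Riemann--Roch combined with $\chi(\Z)=1$ (the rationality hypothesis) yields $\Z(\Z+K_X)=-2$, i.e.,
\[
\sum_i c_ia_i=2.
\]
Since the fundamental cycle of a rational singularity has full support, $c_i\ge 1$ for every $i$, and the nonnegative integer solutions of $\sum c_ia_i=2$ with $c_i\ge 1$ fall into exactly three families: (I) one $i_0$ with $c_{i_0}=1$, $a_{i_0}=2$; (II) one $i_0$ with $c_{i_0}=2$, $a_{i_0}=1$; (III) two indices with $c_{i_1}=c_{i_2}=1$, $a_{i_1}=a_{i_2}=1$ (in each case all other $a_i=0$). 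Translating via $a_i=2-(\Z-E_i)E_i$, cases (II) and (III) correspond to (b) and (c). The one place where care is required is case (I): there $(\Z-E_{i_0})E_{i_0}=\sum_{j\ne i_0}c_jE_jE_{i_0}=0$, which together with $c_j\ge 1$ forces $E_jE_{i_0}=0$ for all $j\ne i_0$, and then connectedness of $E$ forces $E=E_{i_0}$, yielding (a). This forcing argument, leveraging full support plus connectedness to upgrade a numerical identity into irreducibility of $E$, is the main subtlety of the proof.
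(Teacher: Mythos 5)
Your proposal is correct and follows essentially the same route as the paper's proof: identifying $\Tr_A(K_A)=H^0(\cO_X(-F))$ and $\m=H^0(\cO_X(-\Z))$ and using the absence of fixed components for (1)$\Leftrightarrow$(2)$\Leftrightarrow$(3), adjunction for (3)$\Leftrightarrow$(5), and the Riemann--Roch identity $\Z(K_X+\Z)=-2$ to enumerate the three coefficient patterns for (3)$\Leftrightarrow$(4). The paper phrases the case analysis via $K_X+\Z\equiv\sum(-(K_X+\Z)E_i)E_i^*$ rather than your $\sum_i c_ia_i=2$, but this is the same computation, including the connectedness argument that collapses the case $c_{i_0}=1$, $a_{i_0}=2$ to irreducibility of $E$.
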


\begin{proof}
(1) $\Leftrightarrow$ (2).
Since $K_X$ is nef and $H^0(\cO_X(K_X))=K_A$, it follows from \proref{p:rat-tr} that $\tr K_A = H^0(\cO_X(-F))$.
By \lemref{l:invrat} (1), we have $\m=H^0(\cO_X(-\Z))$, and thus $A$ is nearly Gorenstein if and only if $H^0(\cO_X(-F))=H^0(\cO_X(-\Z))$.
Since $-F$ and $-Z_f$  have no fixed components by \lemref{l:invrat} (2), 
we obtain that $H^0(\cO_X(-F))=H^0(\cO_X(-\Z))$ if and only if $F=\Z$ (cf. \lemref{r:fcomp}).

(2) $\Rightarrow$ (3).
It follows from the fact that $-K_X-F$ is nef.

(3) $\Rightarrow$ (2).
Since $K_X\not\equiv 0$,  $F$ is a nonzero anti-nef cycle and thus $F\ge \Z$.
Hence 
\[
H^0(\cO_X(-K_X-F)) = H^0(\cO_X(-K_X-\Z)) = H^0(\cO_X(-K_X)).
\]
However, since $-K_X-\Z$ has no fixed components by the assumption and \lemref{l:invrat} (2), we have $F=\Z$.

(3) $\Rightarrow$ (4).
From the equality $\Z(K_X+\Z)=-2\chi(\Z)=-2$, we easily see that one of the following hold:
 \begin{enumerate}
 \item[(a*)] There exists $E_{i_0}\subset E$ such that $\cff_{E_{i_0}}(\Z)=1$ and  $K_X+\Z \equiv 2 E_{i_0}^*$.
 \item[(b*)] There exists $E_{i_0}\subset E$ such that $\cff_{E_{i_0}}(\Z)=2$ and  $K_X+\Z \equiv  E_{i_0}^*$.
 \item[(c*)] There exist $E_{i_1}, E_{i_2} \subset E$ such that $\cff_{E_{i_1}}(\Z)=\cff_{E_{i_2}}(\Z)=1$ and  $K_X+\Z \equiv E_{i_1}^* + E_{i_2}^*$.
 \end{enumerate}
For any $i$, since $K_XE_i=-E_i^2-2$, we have
\begin{equation}
\label{eq:Z-Ei}
(K_X+\Z)E_i = -E_i^2+\Z E_i-2 = (\Z-E_i)E_i-2.
\end{equation}
It is now obvious that (b*) and (c*) are equivalent to (b) and (c), respectively.
Assume that (a*) holds.
By \eqref{eq:Z-Ei}, we have $(\Z-E_{i_0})E_{i_0} =0$.
Hence $E=E_{i_0}$.
\par 
The implications ``(4) $\Rightarrow$ (3)'' and ``(3) $\Leftrightarrow$ (5)''  immediately follow from \eqref{eq:Z-Ei}.
\end{proof}

\par 
The easiest case of nearly Gorenstein singularities are the following.

\begin{prop} \label{p:redchain}
If $\Z$ satisfies either $(4.a)$ or $(4.c)$ of $\thmref{t:nGrat}$, then the resolution graph is a chain, that is, $A$ is a cyclic quotient singularity. 
\end{prop}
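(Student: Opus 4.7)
The plan is to dispose of case (4.a) trivially and concentrate on case (4.c). If $E$ is irreducible, the dual graph is a single vertex, which is tautologically a chain, and the Hirzebruch--Jung classification identifies the singularity with the cyclic quotient $\frac{1}{n}(1,1)$, where $n = -E_1^2$.

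For case (4.c), the strategy is to prove by induction that the dual graph is a chain and that every coefficient of $\Z$ equals $1$. The key identity, obtained by expanding $\Z = \sum_j c_j E_j$ with $c_j := \cff_{E_j}(\Z)$ and using that $E_i \cdot E_j \in \{0,1\}$ for $i \ne j$ on the minimal resolution of a rational singularity, is
\[
(\Z - E_i) E_i \;=\; (c_i - 1) E_i^2 + \sum_{j \sim i} c_j,
\]
where ``$j \sim i$'' means $E_j$ meets $E_i$. Applying this at $i = i_1$ and $i = i_2$, the hypotheses $c_{i_1} = c_{i_2} = 1$ and $(\Z - E_i) E_i = 1$ yield $\sum_{j \sim i} c_j = 1$; since every $c_j \ge 1$ and $E$ is connected, each of $E_{i_1}, E_{i_2}$ has exactly one neighbor in the dual graph, and that neighbor also has coefficient $1$ in $\Z$.

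Starting from $E_{i_1}$, I would then iteratively construct a simple path $E_{i_1} = E_{k_0}, E_{k_1}, E_{k_2}, \ldots$ of mutually adjacent vertices with $c_{k_l} = 1$ throughout. At an interior step $k_l \notin \{i_1, i_2\}$, the identity together with $c_{k_l} = 1$ forces $\sum_{j \sim k_l} c_j = 2$; since $E_{k_{l-1}}$ is already a neighbor of coefficient $1$, there is a unique further neighbor $E_{k_{l+1}}$, also of coefficient $1$, and $E_{k_l}$ has degree exactly $2$. Because the dual graph of a rational singularity is a finite tree, this simple path must terminate, and it can fail to extend only when the current vertex is $E_{i_2}$. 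The resulting path is then closed under taking neighbors, so by connectedness of $E$ it exhausts all vertices. Hence the dual graph is a chain of smooth rational curves with self-intersection $\le -2$, and by the Hirzebruch--Jung continued fraction expansion such a graph is the exceptional set of a cyclic quotient singularity.

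The main obstacle is maintaining the coefficient-$1$ property along the path: if some interior vertex $E_{k_l}$ had $c_{k_l} \ge 2$, then $(c_{k_l} - 1) E_{k_l}^2 \le -2$ would force $\sum_{j \sim k_l} c_j \ge 4$ and the clean degree-$2$ conclusion would collapse. The induction therefore propagates both $c_{k_l} = 1$ and $\deg(E_{k_l}) = 2$ simultaneously, using the data $c_{i_1} = c_{i_2} = 1$ furnished by (4.c) as the anchors at the two ends of the path.
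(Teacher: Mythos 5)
Your proposal is correct and follows essentially the same route as the paper's proof: starting from $E_{i_1}$, use $(\Z-E_i)E_i=1$ at the two distinguished ends and $(\Z-E_i)E_i=2$ at the other vertices, together with $\cff_{E_i}(\Z)=1$, to propagate the existence of a unique "next" neighbor of coefficient $1$ until the chain terminates at $E_{i_2}$. The only differences are cosmetic — you make the intersection identity and the treatment of case (4.a) explicit, whereas the paper states the iteration more tersely.
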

\begin{proof}
Assume that $E$ is not irreducible and $\Z$ satisfies $(4.c)$ of \thmref{t:nGrat}. By assumption of $E_{i_1}$, $E_{i_1}$ is an end curve and there exists an unique curve $E_1$ such that $E_1E_{i_1}=1$ with $\cff_{E_1}(\Z)=1$. 
Since $(\Z-E_1)E_1=2$ and $\cff_{E_1}(\Z)=1$, we can choose an curve 
$E_2 \ne E_{i_1}$ such that $E_1E_2=1$ with $\cff_{E_2}(\Z)=1$. 
By repeating this procedure, we conclude that the resolution graph is 
a chain and $\Z$ is reduced. 

\begin{figure}[htb]
\begin{picture}(200,40)(-30,0)
    \thicklines
\put(25,28){{\tiny $1$}}
\put(30,22){\circle*{6}}
\put(33,22){\line(1,0){30}}
\put(24,10){{\tiny $E_{i_1}$}}
\put(55,28){{\tiny $1$}}
\put(61,22){\circle*{6}}
\put(54,10){{\tiny $E_1$}}
\put(65,22){\line(1,0){20}}
\put(80,28){{\tiny $1$}}
\put(86,22){\circle*{6}}
\put(103,19){$\cdots$}
\put(79,10){{\tiny $E_2$}}
\put(90,22){\line(1,0){10}}
\put(140,28){{\tiny $1$}}
\put(120,22){\line(1,0){21}}
\put(145,22){\circle*{6}}
%
\put(167,28){{\tiny $1$}}
\put(148,22){\line(1,0){23}}
\put(175,22){\circle*{6}}
\put(166,10){{\tiny $E_{i_2}$}}
\end{picture}
\caption{\label{fig:Center-nG} The resolution graph of a cyclic quotient singularity}
\end{figure}
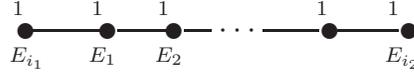
\par 
This implies that $A$ is a cyclic quotient singularity. 
\end{proof}

\begin{prop}\label{p:r-1r}
Assume that $\Z$ satisfies $(4.b)$ of Theorem $\ref{t:nGrat}$ and that $\{E_1, \dots, E_r\}$ is the set of components of $E-E_{i_0}$ intersecting $E_{i_0}$.
Moreover, assume that one of following conditions$:$
\begin{enumerate}
\item $\cff_{E_j}(\Z)=1$ for $1\le j \le r$.
\item $E$ is star-shaped and $E_{i_0}$ is the center.
\end{enumerate}
Then, we obtain that 
\[
E=E_{i_0}+E_1+\cdots+E_r, 
\
E_{i_0}^2=-r+1.
\]
\end{prop}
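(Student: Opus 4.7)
The plan is to handle the two hypotheses (1) and (2) separately and to reduce case (2) to case (1) by showing that each branch of the star-shaped graph has length one. Throughout I use the data from Theorem \ref{t:nGrat} (4.b), namely $(\Z - E_{i_0}) E_{i_0} = 1$, $\cff_{E_{i_0}}(\Z) = 2$, and $(\Z - E_j) E_j = 2$ for every $j \ne i_0$, together with the fact, recalled in the paper, that the minimal resolution of a rational singularity is a good resolution, so that $E_i \cdot E_j \in \{0, 1\}$ whenever $i \ne j$.

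For case (1), I would decompose $\Z = 2 E_{i_0} + \sum_{j=1}^{r} E_j + \sum_k d_k E_k'$, where $E_k'$ runs over the components of $E$ outside $\{E_{i_0}, E_1, \ldots, E_r\}$ and $d_k \ge 1$. Fixing $j \in \{1, \ldots, r\}$ and expanding $\Z \cdot E_j$ by its neighbors, the contributions from $E_{i_0}$ and $E_j$ already account for $2 + E_j^2$, while any additional neighbor $E_k'$ contributes $d_k (E_k' \cdot E_j) \ge 0$. Matching against the equation $\Z \cdot E_j = E_j^2 + 2$ forces every such contribution to vanish, so $E_j$ has no neighbor outside $\{E_{i_0}\}$. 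Connectedness of $E$ then yields $E = E_{i_0} + E_1 + \cdots + E_r$, and the formula $E_{i_0}^2 = 1-r$ drops out of $(\Z - E_{i_0}) E_{i_0} = 1$.

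For case (2), it suffices to prove that every branch of the star-shaped graph has length one, so that the $E_j$'s exhaust the branches and case (1) applies verbatim. I would fix a branch $E_1 - E_2 - \cdots - E_s$ with $E_1 \cdot E_{i_0} = 1$, set $a_t = -E_t^2 \ge 2$ and $c_t = \cff_{E_t}(\Z) \ge 1$, and adopt the convention $c_0 = 2$, $c_{s+1} = 0$. The conditions $(\Z - E_t) E_t = 2$ for $1 \le t \le s$ then collapse to the uniform recursion $c_{t-1} + c_{t+1} = (c_t - 1) a_t + 2$. The substitution $u_t := c_t - 1$ reduces this to the cleaner linear recursion $u_{t+1} = a_t u_t - u_{t-1}$, with boundary values $u_0 = 1$ and $u_{s+1} = -1$, subject to $u_t \ge 0$ for $1 \le t \le s$.

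The main step is a short monotonicity argument. Assume $s \ge 2$ for contradiction. Then $u_1 \ge 1$, since $u_1 = 0$ would force $u_2 = a_1 u_1 - u_0 = -1 < 0$, contradicting $c_2 \ge 1$. An easy induction using $a_t \ge 2$ shows that whenever $u_t \ge u_{t-1} \ge 1$, one has $u_{t+1} - u_t = (a_t - 1) u_t - u_{t-1} \ge (a_t - 2) u_t \ge 0$; hence the sequence $(u_t)_{t \ge 0}$ is nondecreasing and bounded below by $1$, contradicting $u_{s+1} = -1$. Thus each branch has length one, and case (1) finishes the proof. I expect the main obstacle to be setting up the boundary conditions on the branch cleanly, namely checking that the formal value $c_0 = 2$ correctly encodes the contribution of $E_{i_0}$ at the endpoint $t = 1$ and that the same recursion continues to hold at the far end $t = s$ via the convention $c_{s+1} = 0$.
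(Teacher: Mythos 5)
Your argument is correct, and your case (1) is essentially the paper's own proof. For case (2), however, you take a genuinely different route. The paper first shows $\cff_{E_j}(\Z)=1$ for every neighbour $E_j$ of $E_{i_0}$: it invokes the computation-sequence criterion \proref{p:ratcompseq} to get $EE_{i_0}=1$, hence $E_{i_0}^2+r=1$, and combines this with $(\Z-E_{i_0})E_{i_0}=1$ to conclude $\sum_j\cff_{E_j}(\Z)=r$, which forces all these coefficients to be $1$ and reduces case (2) to case (1). You instead work branch by branch: the conditions $(\Z-E_t)E_t=2$ along a chain become the three-term recursion $u_{t+1}=a_tu_t-u_{t-1}$ for $u_t=\cff_{E_t}(\Z)-1$ with boundary values $u_0=1$, $u_{s+1}=-1$, and your monotonicity argument correctly rules out branches of length $s\ge 2$ (I checked the recursion at both ends, including the conventions $c_0=2$ and $c_{s+1}=0$). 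This is more elementary in that it bypasses \proref{p:ratcompseq} and uses only the numerics of the dual graph, at the cost of needing the branch structure explicitly (each branch is a chain attached to $E_{i_0}$ at one end with transverse intersection, which follows from star-shapedness together with the tree property of rational resolution graphs). Two small points you should make explicit. First, in case (1) the nonnegative ``extra'' contributions to $\Z E_j$ include possible neighbours $E_{j'}$ with $j'\ne j$ inside $\{E_1,\dots,E_r\}$, not only components outside that set; they are killed by the same nonnegativity argument (or by the tree property). Second, at the end of case (2) you cannot quite apply case (1) ``verbatim'' from branch length one alone: case (1) assumes $\cff_{E_j}(\Z)=1$, and you need to note that this drops out of the $s=1$ instance of your own recursion, namely $u_2=a_1u_1-u_0=-1$ gives $a_1u_1=0$, hence $u_1=0$.
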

\begin{proof}
Assume that the condition (1) holds.
Since  $(\Z-E_{i_0})E_{i_0}=1$, we have $E_{i_0}^2+r=1$.
For $1\le j \le r$, it follows from  $(Z-E_j)E_j=2$ that $E_j$ intersects only $E_{i_0} \le E-E_j$.

Next, assume (2).
First, note that $2=\cff_{E_{i_0}}(\Z) \ge \cff_{E_j}(\Z)$ for $1\le j \le r$.
We have $E E_{i_0}=1$ by \proref{p:ratcompseq}, and thus $E_{i_0}^2 + r= 1$.
On the other hand, we have
\[
E_{i_0}^2 + 1 = \Z E_{i_0} = 2 E_{i_0}^2 + \sum_{j=1}^r \cff_{E_j}(\Z).
\]
Hence $\sum_{j=1}^r \cff_{E_j}(\Z) = -E_{i_0}^2 + 1 = r$.
Therefore, (2) is reduced to the case (1).
\end{proof}

\begin{rem}
Let $C$ be a nonsingular curve and $D$ a $\Q$-divisor on $C$ with $\deg D>0$.
Then we have a normal graded ring (cf. \cite{Wt}, \cite{TW})
\[
R(C,D):=\bigoplus_{n\ge 0}H^0(C, \cO_C(n D))T^n\subset k(C)[T]
\] 
Assume $C=\PP^1$.
Let  $P_1, \dots, P_r \in \PP^1$ be distinct points and let 
$Q\in \PP^1$ be an arbitrary point.
Let $D=(r-1)Q + \sum _{j=1}^r (1/q_j)P_j$, where $q_i$ are integers more than $1$.
If $A$ is the localization of  $R(C,D)$ with respect to 
the maximal ideal $\bigoplus_{n\ge 1}H^0(C, \cO_C(nD)T^n$, 
then it satisfies the conditions in \proref{p:r-1r}.
\end{rem}

\begin{ex} \label{NonLT}
Recall that  $E=\bigcup_{i=1}^n E_i$.
Assume that $n\ge 4$, $E_1^2=-n+2$, and that for $j \ge 2$,  each $E_j$ is a connected component of $E-E_1$ with $E_j^2=-2$. 
Then $\Z=E+E_1$ and $K_X+\Z\equiv E_1^*$, that is, the condition (4.b) in \thmref{t:nGrat} is satisfied.
If $n\ge 5$, then this singularity $A$ is not log-terminal and $e(A)=n-2$. 

\begin{figure}[htb]
\begin{picture}(300,55)(0,0)
\thicklines
%
\put(70,23){{\tiny $1$}}
\put(75,17){\circle{6}}
\put(70,5){{\tiny $E_2$}}
\put(93,20){{\tiny $2$}}
\put(105,19){\line(-1,1){16}}
\put(113,27){$\ddots$}
\put(78,17){\line(1,0){24}}
\put(105,17){\circle*{7}}
\put(100,5){{\tiny $E_{1}$}}
\put(78,37){{\tiny $1$}}
\put(87,37){\circle{6}}
\put(133,23){{\tiny $1$}}
\put(108,17){\line(1,0){24}}
\put(135,17){\circle{6}}
\put(130,5){{\tiny $E_n$}}
\put(96,43){{\tiny $1$}}
\put(105,19){\line(0,1){21}}
\put(105,43){\circle{6}}
\put(110,40){{\tiny $E_4$}}
%
  
%
\end{picture}
\caption{\label{fig:Center-nG} A rational singularity
which satisfies (4.b)
}
\end{figure}
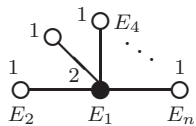
\end{ex}
\section{Rational singularities having almost reduced fundamental cycle}

\par
In general, there are many examples of resolution graphs for nearly Gorenstein rational singularities. 
But we can classify all resolution graphs of nearly Gorenstein 
rational singularities with almost reduced fundamental cycle. 
As a special case, we can recover the classification of 
nearly Gorenstein rational triple points (\cite{MY}). 
Moreover, our method will be useful to classify all nearly Gorenstein 
rational singularities of multiplicity $e$ for a given integer $e \ge 3$. 

\par 
We use the notation as in the previous section. 
\begin{defn}[\textrm{\cite{La79}}] \label{AR-defn}
A rational singularity $(A,\m)$ has an \textit{almost reduced 
fundamental cycle} if $\cff_{E_i}(\Z)=1$ for every $E_i$ 
with $-E_i^2 \ge 3$. 
\end{defn}

\begin{thm} \label{ARnG}
Assume that the fundamental cycle $\Z$ is almost reduced. 
Then the following conditions are equivalent$:$ 
\begin{enumerate}
\item $A$ is nearly Gorenstein. 
\item The resolution graph of $X$ and $\Z$ is one of the following lists, 
where $\circ$  stands for a $(-2)$-curve. 
\par 
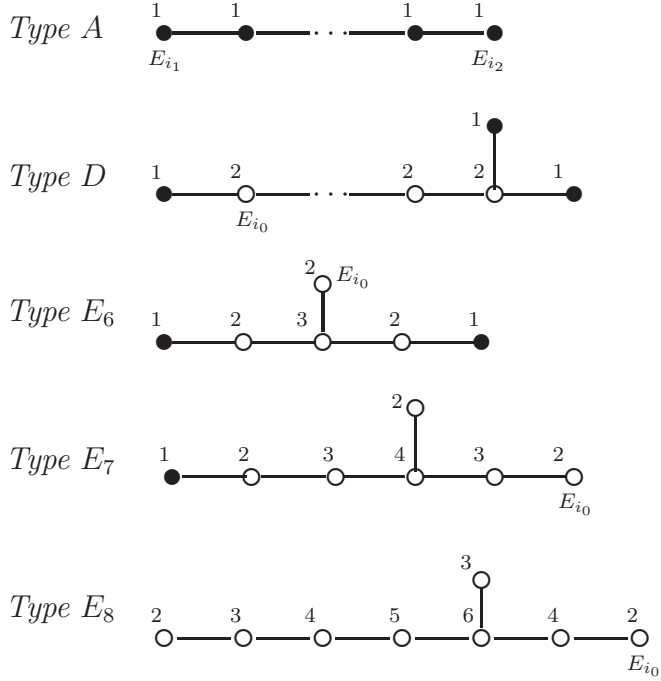
\begin{figure}[htb]
\begin{picture}(400,60)(-30,0)
    \thicklines
\put(-30,20){\text{Type $A$}}
\put(25,28){{\tiny $1$}}
\put(30,22){\circle*{6}}
\put(33,22){\line(1,0){30}}
\put(24,10){{\tiny $E_{i_1}$}}
\put(55,28){{\tiny $1$}}
\put(61,22){\circle*{6}}
%
\put(65,22){\line(1,0){20}}
\put(86,19){$\cdots$}
\put(120,28){{\tiny $1$}}
\put(100,22){\line(1,0){21}}
\put(125,22){\circle*{6}}
%
\put(147,28){{\tiny $1$}}
\put(128,22){\line(1,0){23}}
\put(155,22){\circle*{6}}
\put(146,10){{\tiny $E_{i_2}$}}
\end{picture}

\begin{picture}(400,55)(-30,0)
    \thicklines
\put(-30,20){\text{Type $D$}}
\put(25,23){{\tiny $1$}}
\put(30,17){\circle*{6}}
\put(33,17){\line(1,0){25}}
%
\put(55,23){{\tiny $2$}}
\put(61,17){\circle{6}}
  \put(57,5){{\tiny $E_{i_0}$}}
\put(65,17){\line(1,0){20}}
\put(86,14){$\cdots$}
\put(120,23){{\tiny $2$}}
\put(100,17){\line(1,0){21}}
\put(125,17){\circle{6}}
%
\put(147,23){{\tiny $2$}}
\put(128,17){\line(1,0){23}}
\put(155,17){\circle{6}}
%
\put(177,23){{\tiny $1$}}
\put(158,17){\line(1,0){24}}
\put(185,17){\circle*{6}}
\put(146,43){{\tiny $1$}}
\put(155,19){\line(0,1){21}}
\put(155,43){\circle*{6}}
\end{picture}

\begin{picture}(400,50)(-30,0)
    \thicklines
 \put(-30,20){\text{Type $E_6$}}   
  \put(25,18){{\tiny $1$}}
\put(30,12){\circle*{6}}
\put(33,12){\line(1,0){24}}
  \put(55,18){{\tiny $2$}}
\put(60,12){\circle{6}}
\put(63,12){\line(1,0){24}}
  \put(80,18){{\tiny $3$}}
\put(90,12){\circle{6}}
\put(93,12){\line(1,0){24}}
  \put(115,18){{\tiny $2$}}
\put(120,12){\circle{6}}
\put(123,12){\line(1,0){24}}
  \put(145,18){{\tiny $1$}}
\put(150,12){\circle*{6}}
\put(90,16){\line(0,1){15}}
  \put(83,38){{\tiny $2$}}
\put(90,34){\circle{6}}
  \put(95,35){{\tiny $E_{i_0}$}}
\end{picture}

\begin{picture}(400,55)(30,0)
\thicklines
 \put(30,20){\text{Type $E_7$}}   
\put(88,23){{\tiny $1$}}
\put(97,17){\line(1,0){24}}
\put(93,17){\circle*{6}}
%
\put(118,23){{\tiny $2$}}
\put(127,17){\line(1,0){24}}
\put(123,17){\circle{6}}
%
\put(150,23){{\tiny $3$}}
\put(155,17){\circle{6}}
%
\put(177,23){{\tiny $4$}}
\put(158,17){\line(1,0){23}}
\put(185,17){\circle{6}}
%
\put(207,23){{\tiny $3$}}
\put(188,17){\line(1,0){24}}
\put(215,17){\circle{6}}
%
\put(176,43){{\tiny $2$}}
\put(185,19){\line(0,1){21}}
\put(185,43){\circle{6}}
\put(237,23){{\tiny $2$}}
\put(218,17){\line(1,0){24}}
\put(245,17){\circle{6}}
  \put(239,5){{\tiny $E_{i_0}$}}
\end{picture}

\begin{picture}(400,55)(-30,0)
    \thicklines
 \put(-30,20){\text{Type $E_8$}}   
  \put(25,18){{\tiny $2$}}
\put(30,12){\circle{6}}
\put(35,12){\line(1,0){20}}
  \put(55,18){{\tiny $3$}}
\put(60,12){\circle{6}}
\put(65,12){\line(1,0){20}}
  \put(83,18){{\tiny $4$}}
\put(90,12){\circle{6}}
\put(95,12){\line(1,0){20}}
  \put(115,18){{\tiny $5$}}
\put(120,12){\circle{6}}
\put(125,12){\line(1,0){20}}
  \put(143,18){{\tiny $6$}}
\put(150,12){\circle{6}}
\put(155,12){\line(1,0){20}}
  \put(175,18){{\tiny $4$}}
\put(180,12){\circle{6}}
\put(185,12){\line(1,0){20}}
  \put(205,18){{\tiny $2$}}
\put(210,12){\circle{6}}
  \put(205,0){{\tiny $E_{i_0}$}}
\put(150,16){\line(0,1){15}}
  \put(142,38){{\tiny $3$}}
\put(150,34){\circle{6}}
\end{picture}
\caption{\label{fig:AlmostReduced} nG rational singularity of almost reduced fundamental cycle  
}
\end{figure}
\end{enumerate}
When this is the case, the resolution graph is star-shaped. 
\end{thm}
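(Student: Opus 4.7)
My plan is to use \thmref{t:nGrat} for both directions: condition (5) for the easy direction (2) $\Rightarrow$ (1), and condition (4) for the classification (1) $\Rightarrow$ (2).

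For (2) $\Rightarrow$ (1), I proceed type by type. For each listed resolution graph with its marked coefficients, I first verify that the depicted $\Z$ is indeed the fundamental cycle by checking anti-nefness and minimality. Then I apply criterion (5) of \thmref{t:nGrat}: for every $E_i \subset E$ with $E_i^2 \le -3$ --- precisely the $\bullet$ vertices --- the inequality $\Z E_i \le E_i^2 + 2$ must hold. In each type, each $\bullet$ is a coefficient-$1$ leaf whose unique neighbor $E_k$ has coefficient at most $2$ (visible from the figures), so $\Z E_i = E_i^2 + \cff_{E_k}(\Z) \le E_i^2 + 2$.

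For (1) $\Rightarrow$ (2), I apply \thmref{t:nGrat}(4), producing three cases. Cases (4.a) and (4.c) are immediate from \proref{p:redchain}: they yield a single vertex or a chain with reduced $\Z$, both of Type $A$. The substantive case is (4.b), where $\cff_{E_{i_0}}(\Z) = 2$. Since $\Z$ is almost reduced, every $E_j \subset E$ with $\cff_{E_j}(\Z) \ge 2$ is automatically a $(-2)$-curve. Letting $S_j$ denote the sum of $\cff_{E_k}(\Z)$ over components $E_k \ne E_j$ adjacent to $E_j$, we have $(\Z - E_j) E_j = (\cff_{E_j}(\Z) - 1) E_j^2 + S_j$, so condition (4.b) translates into the combinatorial constraints $S_{i_0} = 3$; $S_j = 2\,\cff_{E_j}(\Z)$ at every coefficient-$\ge 2$ vertex other than $E_{i_0}$; and $S_j = 2$ at every coefficient-$1$ vertex. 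An immediate structural consequence is that every coefficient-$1$ vertex must be a leaf whose unique neighbor has coefficient $2$: a coefficient-$1$ middle vertex would propagate an infinite chain of coefficient-$1$'s, contradicting the finiteness of $E$.

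I then classify by partitioning $S_{i_0} = 3$ into three subcases: (I) three coefficient-$1$ leaves at $E_{i_0}$, giving $D_4$ by \proref{p:r-1r}; (II) one coefficient-$2$ neighbor plus one coefficient-$1$ leaf, propagating via the balanced relation along a chain of coefficient-$2$ vertices until it closes with two coefficient-$1$ leaves, producing $D_n$ for $n \ge 5$; or (III) a single coefficient-$3$ neighbor, initiating the $E$-type extension. In (III), the balanced relation at each coefficient-$\ge 3$ vertex forces the coefficient sequence to grow in a prescribed pattern, and since coefficient-$1$ leaves attach only to coefficient-$2$ vertices, the only permissible terminations match $E_6$, $E_7$, and $E_8$. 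Star-shapedness follows because each resulting tree has at most one branching vertex. The main obstacle will be the enumeration in subcase (III): showing that the coefficient sequence $2, 3, 4, 5, 6$ cannot extend further and that each allowed closure yields exactly one of the listed $E$-types. This is essentially the classical ADE classification reformulated via anti-nef cycle relations, and I would carry it out by a bounded finite check at each extension step.
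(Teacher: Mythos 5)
Your proposal follows essentially the same route as the paper's proof: \thmref{t:nGrat}(4) together with \proref{p:redchain} reduces everything to case (4.b); your relation for $S_j$ is exactly the paper's $d(E_j)=(z_j-1)b_j+2$ (with $b_j=2$ forced at every coefficient-$\ge 2$ vertex by almost-reducedness); your subcases (I)--(III) are precisely the paper's Cases 1--3 according to whether $E_{i_0}$ is a node, an interior vertex, or an end; and your observation that every coefficient-$1$ vertex is a leaf attached to a coefficient-$2$ vertex is exactly the termination device the paper uses to bound the $E$-type chains (e.g.\ $d(E_{2k+2})=2\ge k+1$ forcing $k=1$ for $E_6$, and $k+2\le d(E_{2k+3})=4$ for $E_7,E_8$), so the deferred enumeration in (III) does close. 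One small slip in your (2)$\Rightarrow$(1) direction: the interior vertices of a Type $A$ chain are $\bullet$'s that are \emph{not} leaves, but criterion (5) of \thmref{t:nGrat} still holds for them since there $\Z E_i=E_i^2+1+1=E_i^2+2$.
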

\begin{proof}
$(2) \Longrightarrow (1):$ 
First note that $A$ whose resolution graph as above is a rational singularity.  
If $\Z$ is of Type $A$, then it satisfies (4.a) (resp. (4.c)) 
of Theorem $\ref{t:nGrat}$ if $n=1$ (resp. 
$n \ge 2$ and $E_{i_1}$ and $E_{i_2}$ are end curves). 
Moreover, if $\Z$ is either one of Type $D$, $E_6$, $E_7$ or $E_8$, then it satisfies (4.b) of Theorem $\ref{t:nGrat}$. 
Hence $A$ is nearly Gorenstein. 
\par \vspace{2mm}
$(1) \Longrightarrow (2):$
By virtue of Theorem $\ref{t:nGrat}$ and Proposition \ref{p:redchain}, 
we may assume that $\Z$ satisfies $(4.b)$ of Theorem $\ref{t:nGrat}$. 
For each $i$, we put 
\[
z_i=\cff_{E_i}(\Z), \quad d(E_i)=\sum_{E_iE_j=1}z_j \quad  \text{and} \; 
b_i=-E_i^2. 
\]
Since $\Z$ is almost reduced and $z_{i_0}=2$, we have 
$b_{i_0}=2$. 
Thus by assumption, we get 
\[
d(E_{i_0})=(\Z-E_{i_0})E_{i_0}+(z_{i_0}-1)(-E_{i_0}^2)=1+b_{i_0}=3. 
\]
Similarly, we have 
\begin{equation} \label{eqdeg}
d(E_i)=(z_i-1)b_i+2 \;\;\text{for every $i \ne i_0$}. 
\end{equation}

\begin{flushleft}
{\bf Case 1.} The case where $E_{i_0}$ is a node 
(i.e. $\sharp\{E_j\,|\, E_{i_0}E_j=1\} \ge 3$). 
\end{flushleft}
\par
Since $d(E_{i_0})=3$, we can put $\{E_j\,|\, E_{i_0}E_j=1\}=\{E_1,E_2,E_3\}$. 
Then since $z_1+z_2+z_3=d(E_{i_0})=3$ and every $z_i \ge 1$, 
we have $z_1=z_2=z_3=1$. 
Then $\Z$ is Type D with $n=4$ (cf. Proposition \ref{p:r-1r}). 

\begin{picture}(300,40)(-60,0)
    \thicklines 
  \put(55,18){{\tiny $1$}}
\put(60,12){\circle*{6}}
\put(63,12){\line(1,0){24}}
  \put(81,16){{\tiny $2$}}
\put(90,12){\circle{6}}
\put(93,12){\line(1,0){24}}
  \put(115,18){{\tiny $1$}}
\put(120,12){\circle*{6}}
\put(90,16){\line(0,1){15}}
  \put(83,38){{\tiny $1$}}
\put(90,34){\circle*{6}}
  \put(85,0){{\tiny $E_{i_0}$}}
\end{picture}

\begin{flushleft}
{\bf Case 2.} The case where $E_{i_0}$ is neither a node nor an end. 
\end{flushleft}
\par
As $\sharp\{E_j\,|\, E_{i_0}E_j=1\} =2$ and $d(E_{i_0})=3$, 
there exist two curves  $E_1$, $E_2$ such that $z_1=1$ and $z_2=2$. 
Then as $d(E_1)=2 = z_{i_0}$, $E_1$ is an end curve. 
Moreover, $z_2=2$ yields $b_2=2$. 
Then since $d(E_2)-z_{i_0}=(b_2+2)-2 = b_2=2$, either 
one of the following cases occurs (as subgraphs): 
\par
\begin{picture}(300,55)(0,0)
\thicklines
\put(0,40){\text{(2-A)}}
%
\put(38,23){{\tiny $1$}}
\put(47,17){\line(1,0){24}}
\put(43,17){\circle*{6}}
\put(40,5){{\tiny $E_{1}$}}
\put(70,23){{\tiny $2$}}
\put(75,17){\circle{6}}
\put(70,5){{\tiny $E_{i_0}$}}
\put(97,23){{\tiny $2$}}
\put(78,17){\line(1,0){23}}
\put(105,17){\circle{6}}
\put(100,5){{\tiny $E_2$}}
\put(127,23){{\tiny $1$}}
\put(108,17){\line(1,0){24}}
\put(135,17){\circle*{8}}
\put(130,5){{\tiny $E_3$}}
\put(96,43){{\tiny $1$}}
\put(105,19){\line(0,1){21}}
\put(105,43){\circle*{8}}
\put(110,40){{\tiny $E_4$}}
%
  
%
\end{picture}
\begin{picture}(300,55)(70,0)
\thicklines
\put(0,40){\text{(2-B)}}
%
\put(38,23){{\tiny $1$}}
\put(47,17){\line(1,0){24}}
\put(43,17){\circle*{6}}
\put(40,5){{\tiny $E_{1}$}}
\put(70,23){{\tiny $2$}}
\put(75,17){\circle{6}}
\put(70,5){{\tiny $E_{i_0}$}}
\put(97,23){{\tiny $2$}}
\put(78,17){\line(1,0){22}}
\put(105,17){\circle{6}}
\put(100,5){{\tiny $E_2$}}
\put(127,23){{\tiny $2$}}
\put(108,17){\line(1,0){24}}
\put(135,17){\circle{8}}
\put(130,5){{\tiny $E_3$}}
%
%
  
%
\end{picture}

\par 
Suppose (2-A). 
As $z_i=1$  for $i=3,4$, one has $d(E_i)=2=z_2$. 
Hence $E_i$ is an end curve for $i=3,4$. 
Thus this graph is Type $D$ (with $n=5$). 
\par 
Next suppose (2-B). $z_3=2$ yields $b_3=2$. 
By a similar argument as above, one can obtain either one of the 
following cases:
\par 
\begin{picture}(300,55)(0,0)
\thicklines
\put(8,23){{\tiny $1$}}
\put(17,17){\line(1,0){24}}
\put(13,17){\circle*{6}}
\put(12,5){{\tiny $E_1$}}
\put(38,23){{\tiny $2$}}
\put(47,17){\line(1,0){24}}
\put(43,17){\circle{6}}
\put(40,5){{\tiny $E_{i_0}$}}
\put(70,23){{\tiny $2$}}
\put(75,17){\circle{6}}
\put(70,5){{\tiny $E_2$}}
\put(97,23){{\tiny $2$}}
\put(78,17){\line(1,0){23}}
\put(105,17){\circle{6}}
\put(100,5){{\tiny $E_3$}}
\put(127,23){{\tiny $1$}}
\put(108,17){\line(1,0){24}}
\put(135,17){\circle*{8}}
\put(130,5){{\tiny $E_4$}}
\put(96,43){{\tiny $1$}}
\put(105,19){\line(0,1){21}}
\put(105,43){\circle*{8}}
\put(110,40){{\tiny $E_4$}}
%
  
%
\end{picture}
\begin{picture}(300,55)(70,0)
\thicklines
\put(8,23){{\tiny $1$}}
\put(17,17){\line(1,0){24}}
\put(13,17){\circle*{6}}
\put(12,5){{\tiny $E_1$}}
\put(38,23){{\tiny $2$}}
\put(47,17){\line(1,0){24}}
\put(43,17){\circle{6}}
\put(40,5){{\tiny $E_{i_0}$}}
\put(70,23){{\tiny $2$}}
\put(75,17){\circle{6}}
\put(70,5){{\tiny $E_2$}}
\put(97,23){{\tiny $2$}}
\put(78,17){\line(1,0){22}}
\put(105,17){\circle{6}}
\put(100,5){{\tiny $E_3$}}
\put(127,23){{\tiny $2$}}
\put(108,17){\line(1,0){24}}
\put(135,17){\circle{8}}
\put(130,5){{\tiny $E_4$}}
%
%
%
\end{picture}
By repeating this procedure, we conclude that $Z$ is type D. 
\begin{flushleft}
{\bf Case 3.} The case where $E_{i_0}$ is an end. 
\end{flushleft}
\par 
Note that $b_{i_0}=2$ because $z_{i_0}=2 > 1$. 
As $E_{i_0}$ is an end curve, there exists an curve $E_1$ with 
$z_1=3$ and hence $b_1=2$. 
We may assume $\Z$ contains the following chain such that 
$E_m$ is a node$:$ 
\par 
\begin{picture}(400,40)(0,0)
\thicklines
\put(8,23){{\tiny $2$}}
\put(16,17){\line(1,0){24}}
\put(13,17){\circle{6}}
\put(12,5){{\tiny $E_{i_0}$}}
\put(38,23){{\tiny $3$}}
\put(46,17){\line(1,0){25}}
\put(43,17){\circle{6}}
\put(40,5){{\tiny $E_{1}$}}
\put(70,23){{\tiny $4$}}
\put(75,17){\circle{6}}
\put(70,5){{\tiny $E_2$}}
\put(97,23){{\tiny $m+1$}}
\put(78,17){\line(1,0){5}}
\put(86,14){$\cdots$} 
\put(102,17){\line(1,0){5}}
\put(110,17){\circle{6}}
\put(105,5){{\tiny $E_{m-1}$}}
\put(132,23){{\tiny $m+2$}}
\put(113,17){\line(1,0){23}}
\put(140,17){\circle{8}}
\put(135,5){{\tiny $E_m$}}
\put(144,17){\line(1,0){5}}
\put(150,14){$\cdots$} 
%
%
\end{picture}
\par \noindent
where $r:=\sharp\{E_j \,|\, E_jE_m=1\} -1 \ge 2$. 
\begin{flushleft}
{\bf Case 3-A.} The case where $m=2k-1$, $k \ge 1$. 
\end{flushleft}
\par 
\begin{picture}(400,40)(0,0)
\thicklines
\put(8,23){{\tiny $2$}}
\put(16,17){\line(1,0){24}}
\put(13,17){\circle{6}}
\put(12,5){{\tiny $E_{i_0}$}}
\put(38,23){{\tiny $3$}}
\put(46,17){\line(1,0){25}}
\put(43,17){\circle{6}}
\put(40,5){{\tiny $E_1$}}
\put(70,23){{\tiny $4$}}
\put(75,17){\circle{6}}
\put(70,5){{\tiny $E_2$}}
\put(99,23){{\tiny $2k$}}
\put(78,17){\line(1,0){5}}
\put(86,14){$\cdots$} 
\put(102,17){\line(1,0){5}}
\put(110,17){\circle{6}}
\put(105,5){{\tiny $E_{2k-2}$}}
\put(124,24){{\tiny $2k+1$}}
\put(113,17){\line(1,0){23}}
\put(140,17){\circle{6}}
\put(135,5){{\tiny $E_{2k-1}$}}
\put(143,16){\line(1,0){18}}
\put(144,19){\line(1,1){17}}
\put(165,17){\circle*{8}}
\put(163,21){$\vdots$}
\put(165,37){\circle*{8}}
\put(172,15){{\tiny $E_{2k+r-1}$}}
\put(172,35){{\tiny $E_{2k}$}}
%
%
\end{picture}
\par \vspace{1mm}
Put $\{E_j\,|\, E_jE_{2k-1}=1 \}=\{E_{2k-2},E_{2k},\ldots,E_{2k+r-1}\}$
with $z_{2k} \le z_{2k+1} \le \cdots \le z_{2k+r-1}$. 
If $z_{2k}=1$, then $d(E_{2k})=2<2k+1=z_{2k-1}$. 
This is a contradiction. Hence $z_{2k} \ge 2$, $z_{2k+1},\ldots,z_{2k+r-1} \ge 2$ 
and $b_{2k}=\cdots=b_{2k+r-1}=2$. 
It follows from $\Z E_{2k} \le 0$ that 
$2z_{2k} \ge z_{2k-1}=2k+1$. 
Thus $z_{2k} \ge \lceil \frac{2k+1}{2} \rceil=k+1$. 
On the other hand, since 
\[
z_{2k}+z_{2k+1}+\cdots+z_{2k+r-1} =d(E_{2k-1})-z_{2k-2}=
2k \cdot b_{2k-1}+2-2k=2k+2,
\]
we get $r=2$ and $z_{2k}=z_{2k+1}=k+1$. 
In particular, $b_{2k}=b_{2k+1}=2$. 
\par 
\begin{picture}(400,50)(0,0)
\thicklines

%
\put(99,23){{\tiny $2k$}}
\put(86,14){$\cdots$} 
\put(102,17){\line(1,0){5}}
\put(110,17){\circle{6}}
\put(102,5){{\tiny $E_{2k-2}$}}
\put(124,24){{\tiny $2k+1$}}
\put(113,17){\line(1,0){23}}
\put(140,17){\circle{6}}
\put(134,5){{\tiny $E_{2k-1}$}}
\put(143,16){\line(1,0){18}}
\put(144,19){\line(1,1){17}}
\put(165,17){\circle{6}}
\put(165,37){\circle{6}}
\put(158,22){{\tiny $k+1$}}
\put(158,42){{\tiny $k+1$}}
\put(171,10){{\tiny $E_{2k+1}$}}
\put(171,34){{\tiny $E_{2k}$}}
%
%
\end{picture}
\par \vspace{1mm}
As $d(E_{2k})-z_{2k-1}=k b_{2k}+2-(2k+1)=1$, 
we can find an unique curve 
$E_{2k+2}$ such that $E_{2k}E_{2k+2}=1$ with $z_{2k+2}=1$. 
\par 
\begin{picture}(400,40)(0,0)
\thicklines
\put(99,24){{\tiny $2k+1$}}
\put(86,14){$\cdots$} 
\put(102,17){\line(1,0){5}}
\put(110,17){\circle{6}}
\put(102,5){{\tiny $E_{2k-1}$}}
\put(128,24){{\tiny $k+1$}}
\put(113,17){\line(1,0){23}}
\put(140,17){\circle{6}}
\put(134,5){{\tiny $E_{2k}$}}
\put(143,17){\line(1,0){18}}
\put(165,17){\circle*{8}}
\put(159,24){{\tiny $1$}}
\put(154,5){{\tiny $E_{2k+2}$}}
\end{picture}
\par \vspace{1mm}
Then $d(E_{2k+2})=2 \ge z_{2k}=k+1$ yields $k=1$. 
Hence we conclude that $\Z$ is Type $E_6$. 
\begin{flushleft}
{\bf Case 3-B.} The case where $m=2k$, $k \ge 1$. 
\end{flushleft}
\par 
\begin{picture}(400,45)(0,0)
\thicklines
\put(8,23){{\tiny $2$}}
\put(16,17){\line(1,0){24}}
\put(13,17){\circle{6}}
\put(12,5){{\tiny $E_{i_0}$}}
\put(38,23){{\tiny $3$}}
\put(46,17){\line(1,0){25}}
\put(43,17){\circle{6}}
\put(40,5){{\tiny $E_1$}}
\put(70,23){{\tiny $4$}}
\put(75,17){\circle{6}}
\put(70,5){{\tiny $E_2$}}
\put(97,24){{\tiny $2k+1$}}
\put(78,17){\line(1,0){5}}
\put(86,14){$\cdots$} 
\put(102,17){\line(1,0){5}}
\put(110,17){\circle{6}}
\put(104,5){{\tiny $E_{2k-1}$}}
\put(124,24){{\tiny $2k+2$}}
\put(113,17){\line(1,0){23}}
\put(140,17){\circle{6}}
\put(135,5){{\tiny $E_{2k}$}}
\put(143,16){\line(1,0){18}}
\put(144,19){\line(1,1){17}}
\put(165,17){\circle*{8}}
\put(163,21){$\vdots$}
\put(165,37){\circle*{8}}
\put(172,15){{\tiny $E_{2k+r}$}}
\put(172,35){{\tiny $E_{2k+1}$}}
%
%
\end{picture}
\par \vspace{1mm}
Put $\{E_j\,|\, E_jE_{2k}=1 \}=\{E_{2k-1},E_{2k+1},\cdots,E_{2k+r}\}$, where $r \ge 2$ and $z_{2k+1} \le z_{2k+2} \le \ldots \le z_{2k+r}$. 
If $z_{2k+1}=1$, then $d(E_{2k+1})=2< 2k+2=z_{2k}$. 
This is a contradiction. 
Hence $z_{2k+1} \ge 2$ and thus $b_{2k+1}=2$. 
Since $ZE_{2k+1} \le 0$, we have $2 \cdot z_{2k+1} \ge z_{2k}=2k+2$. 
Thus $z_{2k+1} \ge k+1$. 
On the other hand, since 
\[
z_{2k+1}+z_{2k+2}+\cdots+z_{2k+r} =d(E_{2k})-z_{2k-1}=
(2k+1) \cdot b_{2k}+2-(2k+1)=2k+3,
\]
we get $r=2$ and $z_{2k+1}=k+1$ and $z_{2k+2}=k+2$. 
In particular, $b_{2k+1}=b_{2k+2}=2$. 
\par 
\begin{picture}(400,50)(0,0)
\thicklines

%
\put(99,23){{\tiny $2k+1$}}
\put(86,14){$\cdots$} 
\put(102,17){\line(1,0){5}}
\put(110,17){\circle{6}}
\put(102,5){{\tiny $E_{2k-1}$}}
\put(124,24){{\tiny $2k+2$}}
\put(113,17){\line(1,0){23}}
\put(140,17){\circle{6}}
\put(134,5){{\tiny $E_{2k}$}}
\put(143,16){\line(1,0){18}}
\put(144,19){\line(1,1){17}}
\put(165,17){\circle{6}}
\put(165,37){\circle{6}}
\put(158,22){{\tiny $k+2$}}
\put(158,42){{\tiny $k+1$}}
\put(171,10){{\tiny $E_{2k+2}$}}
\put(171,34){{\tiny $E_{2k+1}$}}
%
%
\end{picture}
\par \vspace{1mm}
As $d(E_{2k+1})=2k+2=z_{2k}$, $E_{2k+1}$ is an end curve. 
Since $d(E_{2k+2})=2(k+1)+2=z_{2k}+2$, one of the following two cases 
occurs:
\par
\begin{picture}(300,55)(0,0)
\thicklines
%
\put(33,24){{\tiny $2k+1$}}
\put(47,17){\line(1,0){24}}
\put(43,17){\circle{6}}
\put(40,5){{\tiny $E_{2k-1}$}}
\put(64,24){{\tiny $2k+2$}}
\put(75,17){\circle{6}}
\put(70,5){{\tiny $E_{2k}$}}
\put(107,23){{\tiny $k+2$}}
\put(78,17){\line(1,0){23}}
\put(105,17){\circle{6}}
\put(100,5){{\tiny $E_{2k+2}$}}
\put(135,24){{\tiny $1$}}
\put(108,17){\line(1,0){24}}
\put(135,17){\circle*{8}}
\put(130,5){{\tiny $E_{2k+3}$}}
\put(96,43){{\tiny $1$}}
\put(105,19){\line(0,1){21}}
\put(105,43){\circle*{8}}
\put(110,40){{\tiny $E_{2k+4}$}}
%
  
%
\end{picture}
\begin{picture}(300,55)(70,0)
\thicklines
%
\put(33,24){{\tiny $2k+1$}}
\put(47,17){\line(1,0){24}}
\put(43,17){\circle{6}}
\put(40,5){{\tiny $E_{2k-1}$}}
\put(64,24){{\tiny $2k+2$}}
\put(75,17){\circle{6}}
\put(70,5){{\tiny $E_{2k}$}}
\put(97,24){{\tiny $k+2$}}
\put(78,17){\line(1,0){23}}
\put(105,17){\circle{6}}
\put(100,5){{\tiny $E_{2k+2}$}}
\put(129,24){{\tiny $2$}}
\put(108,17){\line(1,0){24}}
\put(135,17){\circle{8}}
\put(130,5){{\tiny $E_{2k+3}$}}
%
%
%
\end{picture}
\par \vspace{1mm}
If the former case occurs, then $2=d(E_{2k+3}) \ge z_{2k+2}=k+2$.
This is a contradiction. 
Hence we may assume that the latter case occurs. 
Then 
\[
k+2=z_{2k+2} \le d(E_{2k+3}) = b_{2k+3}+2=4.  
\]
Hence $k=1,2$. 
One can easily see that if $k=1$ (resp. $k=2$) then 
$Z$ is Type $E_7$ (resp. Type $E_8$). 
\end{proof}

\par 
Nearly Gorenstein, rational triple points (rational singularities with $e(A)=3)$ are classified in \cite{MY}.    
Since it is known that the fundamental cycle of rational triple points 
is almost reduced, one can apply Theorem $\ref{ARnG}$. 

\begin{ex}\label{e(A)=3}(See, \cite{MY} for the detail.) 
If $A$ is a nearly Gorenstein, rational triple point, then the resolution 
 graph of $Z$ is either 
(1) $A_{0,m,n}$ (2) $B_{0,n}$, (3) $C_{0,n}$, (4) $D_0$, (5) $F_0$, 
where $\bullet$ (resp. $\circ$) stands $(-3)$-curve ($(-2)$-curve). 
\par 
\begin{figure}[htb]
\begin{picture}(400,60)(-30,0)
    \thicklines
\put(-30,20){\text{$(A_{0,m,n})$}}
\put(25,28){{\tiny $1$}}
\put(30,22){\circle{6}}
\put(33,22){\line(1,0){20}}
\put(55,28){{\tiny $1$}}
\put(58,22){\circle{6}}
%
\put(62,22){\line(1,0){20}}
\put(86,19){$\cdots$}
\put(102,22){\line(1,0){5}}
\put(107,28){{\tiny $1$}}
\put(111,22){\circle*{6}}
\put(115,22){\line(1,0){5}}
\put(120,19){$\cdots$}
\put(155,28){{\tiny $1$}}
\put(135,22){\line(1,0){21}}
\put(160,22){\circle{6}}
%
\put(182,28){{\tiny $1$}}
\put(163,22){\line(1,0){23}}
\put(190,22){\circle{6}}
%
\end{picture}

\begin{picture}(400,55)(-30,0)
    \thicklines
\put(-30,20){\text{$(B_{0,n})$}}
\put(25,23){{\tiny $1$}}
\put(30,17){\circle{6}}
\put(33,17){\line(1,0){25}}
%
\put(55,23){{\tiny $2$}}
\put(61,17){\circle{6}}
\put(65,17){\line(1,0){20}}
\put(86,14){$\cdots$}
\put(120,23){{\tiny $2$}}
\put(100,17){\line(1,0){21}}
\put(125,17){\circle{6}}
%
\put(147,23){{\tiny $2$}}
\put(128,17){\line(1,0){23}}
\put(155,17){\circle{6}}
%
\put(177,23){{\tiny $1$}}
\put(158,17){\line(1,0){24}}
\put(185,17){\circle*{6}}
\put(146,43){{\tiny $1$}}
\put(155,19){\line(0,1){21}}
\put(155,43){\circle{6}}
\end{picture}

\begin{picture}(400,55)(-30,0)
    \thicklines
\put(-30,20){\text{$(C_{0,n})$}}
\put(25,23){{\tiny $1$}}
\put(30,17){\circle*{6}}
\put(33,17){\line(1,0){25}}
%
\put(55,23){{\tiny $2$}}
\put(61,17){\circle{6}}
\put(65,17){\line(1,0){20}}
\put(86,14){$\cdots$}
\put(120,23){{\tiny $2$}}
\put(100,17){\line(1,0){21}}
\put(125,17){\circle{6}}
%
\put(147,23){{\tiny $2$}}
\put(128,17){\line(1,0){23}}
\put(155,17){\circle{6}}
%
\put(177,23){{\tiny $1$}}
\put(158,17){\line(1,0){24}}
\put(185,17){\circle{6}}
\put(146,43){{\tiny $1$}}
\put(155,19){\line(0,1){21}}
\put(155,43){\circle{6}}
\end{picture}

\begin{picture}(400,50)(-30,0)
    \thicklines
 \put(-30,20){\text{$(D_0)$}}   
  \put(25,18){{\tiny $1$}}
\put(30,12){\circle*{6}}
\put(33,12){\line(1,0){24}}
  \put(55,18){{\tiny $2$}}
\put(60,12){\circle{6}}
\put(63,12){\line(1,0){24}}
  \put(80,18){{\tiny $3$}}
\put(90,12){\circle{6}}
\put(93,12){\line(1,0){24}}
  \put(115,18){{\tiny $2$}}
\put(120,12){\circle{6}}
\put(123,12){\line(1,0){24}}
  \put(145,18){{\tiny $1$}}
\put(150,12){\circle{6}}
\put(90,16){\line(0,1){15}}
  \put(83,38){{\tiny $2$}}
\put(90,34){\circle{6}}
\end{picture}

\begin{picture}(400,55)(30,0)
\thicklines
 \put(30,20){\text{$(F_0)$}}   
\put(88,23){{\tiny $1$}}
\put(97,17){\line(1,0){24}}
\put(93,17){\circle*{6}}
%
\put(118,23){{\tiny $2$}}
\put(127,17){\line(1,0){24}}
\put(123,17){\circle{6}}
%
\put(150,23){{\tiny $3$}}
\put(155,17){\circle{6}}
%
\put(177,23){{\tiny $4$}}
\put(158,17){\line(1,0){23}}
\put(185,17){\circle{6}}
%
\put(207,23){{\tiny $3$}}
\put(188,17){\line(1,0){24}}
\put(215,17){\circle{6}}
%
\put(176,43){{\tiny $2$}}
\put(185,19){\line(0,1){21}}
\put(185,43){\circle{6}}
\put(237,23){{\tiny $2$}}
\put(218,17){\line(1,0){24}}
\put(245,17){\circle{6}}
%
\end{picture}
\caption{\label{fig:nG-RTP} Nearly Gorenstein RTP
}
\end{figure}
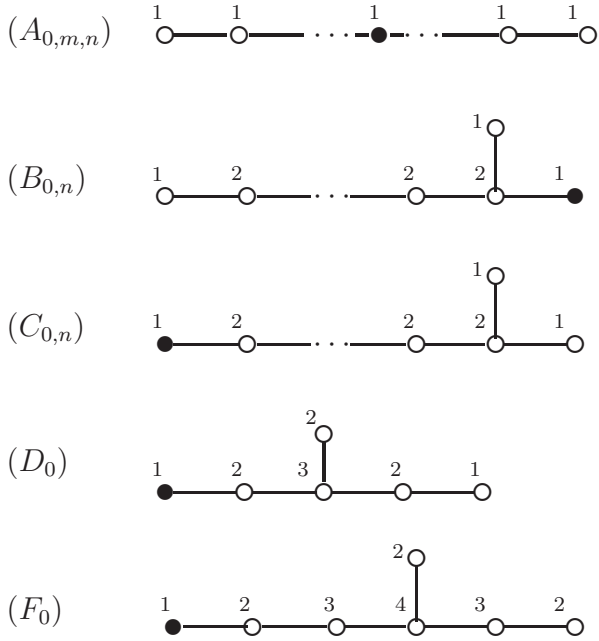
\end{ex}

\par 
Now suppose $e(A) \ge 4$ and $A$ is a nearly Gorenstein rational 
singularity. 
Then its resolution graph is not necessary star-shaped. 
Moreover, the fundamental cycle $Z_f$ is not necessary 
almost reduced; see example below.

\begin{ex}\label{e:2graph}
If the weighted dual graph of $E$ is one of the graphs as in \figref{fig:2nodes},
where \begin{picture}(6,6)\thicklines\put(3,3){\circle{6}}\end{picture} (resp. \begin{picture}(6,6)\put(3,3){\circle*{6}}\end{picture}) stands $(-2)$-curve (resp. $(-3)$-curve),  
then $Z$ satisfies (4.b) of \thmref{t:nGrat}.
\begin{figure}[htb]
\begin{picture}(300,55)(-20,0)
\thicklines
\put(38,23){{\tiny $1$}}
\put(47,17){\line(1,0){24}}
\put(43,17){\circle{6}}
\put(66,23){{\tiny $2$}}
\put(75,17){\circle*{6}}
\put(70,5){{\tiny $E_{i_0}$}}
\put(97,23){{\tiny $2$}}
\put(78,17){\line(1,0){23}}
\put(105,17){\circle{6}}
\put(127,23){{\tiny $1$}}
\put(108,17){\line(1,0){24}}
\put(135,17){\circle{6}}
\put(66,43){{\tiny $1$}}
\put(75,19){\line(0,1){21}}
\put(75,43){\circle{6}}
\put(96,43){{\tiny $1$}}
\put(105,19){\line(0,1){21}}
\put(105,43){\circle{6}}
\end{picture}\begin{picture}(300,55)(80,0)
\thicklines
\put(6,23){{\tiny $1$}}
\put(15,17){\line(1,0){24}}
\put(11,17){\circle{6}}
\put(38,23){{\tiny $2$}}
\put(47,17){\line(1,0){24}}
\put(43,17){\circle{6}}
\put(66,23){{\tiny $3$}}
\put(75,17){\circle{6}}
\put(97,23){{\tiny $2$}}
\put(78,17){\line(1,0){22}}
\put(105,17){\circle{6}}
\put(127,23){{\tiny $1$}}
\put(108,17){\line(1,0){24}}
\put(135,17){\circle{8}}
\put(130,5){{\tiny $E_3$}}
\put(66,43){{\tiny $2$}}
\put(75,19){\line(0,1){21}}
\put(75,43){\circle*{6}}
\put(78,35){{\tiny $E_{i_0}$}}
\put(110,43){{\tiny $1$}}
\put(78,43){\line(1,0){22}}
\put(105,43){\circle{6}}
\end{picture}
\caption{\label{fig:2nodes}  Examples of Nearly Gorenstein rings with $e(A)=4$}
\end{figure}
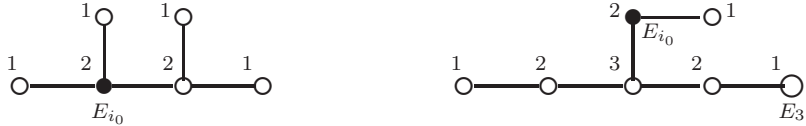
\end{ex}
\section{Quotient singularities}

\par
In this section, we assume that $k = \C$ and  
we want to classify \lq\lq quotient singularities" which are nearly Gorenstein. 
Here a \lq\lq quotient singularity" means the invariant subring $\C[[x,y]]^G$, 
where $G$ is a finite subgroup of $\GL(2,\C)$.
Although the classification of quotient singularities which are nearly Gorenstein is given in \cite[Proposition 3.5]{Di},  
we give our classification in terms of the graph of $Z$. 
\par 
In dimension $2$ and  $\chr(k)=0$, it is known that \lq\lq quotient singularities"
and \lq\lq log-terminal singularities" are equivalent and also, by  
\lq\lq mod $p$ reduction" to \lq\lq $F$-regular" rings in characteristic $p$ (cf. \cite{HW}).   
 Since such quotient singularities are certainly completion of
 graded rings, we can use the classification
 of $F$-regular  graded rings in \cite{W}. 

\par \vspace{2mm}
In what follows we assume that $A$ is \textit{not} a cyclic quotient singularity and is the localizaion 
of $R = R(\PP^1, D)$, with 
\[
D = E - \sum_{i=1}^r \dfrac{p_i}{q_i} P_i
\] 
with different points $P_1\ldots, P_r$ on $\PP^1$ and we always assume that $p_i$ and $q_i$ are 
relatively prime positive integers  with $0<p_i<q_i$ for every $i$ and $E$ is an integral divisor on $\PP^1$. 
\par
If $R(\PP^1,D)$ is log-terminal (and not a cyclic quotient singularity), then $r= 3$ and 
$(q_1, q_2, q_3)$ are one of the followings;  
\[
(2,2,n), \quad (2,3,3), \quad (2,3,4), \quad (2,3,5). 
\]

\par 
Now, if $\deg(E)\ge 3$, then $Z$ is reduced and then $A$ is not nearly Gorenstein by Theorem \ref{ARnG}.  
Hence  we have $\deg E = 2$, since if $\deg E = 1$, then we will have 
$\deg D < 0$, which is absurd. 
Note that the chain \[E_1-E_2-\cdots - E_{k+1},\]
where $E_i^2 = -2$ for $1\le i \le k$ and $E_{k+1}^2 = -s$, corresponds to  
\[\dfrac{ks - (k-1)}{(k+1) s - k}. 
\] 
Since the fundamental cycle of any quotient singularity is almost reduced 
 by Theorem \ref{ARnG}, 
we conclude that nearly Gorenstein and non-Gorenstein 
quotient singularities are  classified as follows. 

\begin{thm}[\textrm{cf. \cite{Di}}] \label{Quot-nG}
Assume $R=R(\PP^1, D)$ is a quotient singularity, not cyclic quotient singularity  
and not Gorenstein.
If $R$ is nearly Gorenstein, then $D$ is one of the followings. 
\begin{enumerate}
\item $1/2 P_1 + 1/2 P_2 - \dfrac{ks - (k-1)}{(k+1) s - k} P_3$ for some $k\ge 0$ and $s\ge 3$.  
\\[1mm]
\item $1/2 P_1 + 2/3 P_2 - 1/3 P_3$
\item $1/2 P_1 + 2/3 P_2 - 2/3 P_3$
\item $1/2 P_1 + 1/3 P_2 - 1/4 P_3$
\item $1/2 P_1 + 2/3 P_2 - 1/4 P_3$
\item $1/2 P_1 + 2/3 P_2 - 3/4 P_3$
\item $1/2 P_1 + 1/3 P_2 - 1/5 P_3$
\item $1/2 P_1 + 1/3 P_2 - 3/5 P_3$
\item $1/2 P_1 + 2/3 P_2 - 1/5 P_3$
\item $1/2 P_1 + 2/3 P_2 - 3/5 P_3$
\item $1/2 P_1 + 2/3 P_2 - 4/5 P_3$
\end{enumerate}
\end{thm}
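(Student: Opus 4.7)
My plan is to combine Theorem \ref{ARnG} with the Pinkham--Demazure description of quotient singularities. The paragraph preceding the theorem already reduces the problem: two-dimensional quotient singularities over $\C$ are log-terminal, hence rational, with $r=3$ and $(q_1,q_2,q_3)$ in the short list $\{(2,2,n),(2,3,3),(2,3,4),(2,3,5)\}$; their fundamental cycle is almost reduced so Theorem \ref{ARnG} applies; non-cyclicity excludes Type $A$; and $\deg D>0$ together with the observation that $\deg E\ge 3$ forces $Z_f$ to be reduced (hence Type $A$) leaves only $\deg E=2$. Thus the resolution graph is of Type $D$, $E_6$, $E_7$, or $E_8$ with central curve of self-intersection $-2$, and every branch issuing from the center is a chain of $k$ consecutive $(-2)$-curves followed by a $(-s)$-curve, contributing $p_i/q_i=(ks-(k-1))/((k+1)s-k)$ via the formula recalled just before the theorem.

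The whole classification then rests on one elementary diophantine observation: the solutions $(k,s)$ with $k\ge 0$ and $s\ge 2$ to $(k+1)s-k=q$ are $\{(0,3),(1,2)\}$ for $q=3$, $\{(0,4),(2,2)\}$ for $q=4$, and $\{(0,5),(1,3),(3,2)\}$ for $q=5$, yielding admissible branch fractions $\{1/3,2/3\}$, $\{1/4,3/4\}$, and $\{1/5,3/5,4/5\}$ respectively. Together with the fractions $p/q=1/s$ from single-curve branches, these exhaust the fractions that can arise as a branch contribution in a nearly Gorenstein quotient singularity.

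For Type $D$ (fork with two single-curve branches and one chain branch), I would enumerate all assignments of branches to log-terminal triples, identify isomorphic singularities via the swap $P_2\leftrightarrow P_3$, and arrive at Case (1) from $(2,2,n)$, Cases (2)--(3) from $(2,3,3)$, Cases (4)--(6) from $(2,3,4)$, and Cases (7), (9)--(11) from $(2,3,5)$. For Type $E_6$ (branches of lengths $1,2,2$), the two length-$2$ chain branches contribute $s_L/(2s_L-1)$ and $s_R/(2s_R-1)$; log-terminality combined with non-Gorensteinness forces $\{s_L,s_R\}=\{2,3\}$, giving the remaining Case (8) with data $(1/2,2/3,3/5)$. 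Type $E_7$ (branches $1,2,3$) has a single free parameter $s$ with $q_3=3s-2$, which lies in $\{3,4,5\}$ only at the Gorenstein value $s=2$, and Type $E_8$ has no free self-intersection at all, being forced to be Gorenstein. A brief check that $\deg D>0$ and $0<p_i<q_i$ in each listed case completes the argument.

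The main obstacle is the Type $D$ bookkeeping: for each log-terminal triple one must enumerate every choice of (which branch is the chain) $\times$ (the pair $(k,s)$ for the chain) $\times$ (single-curve self-intersections), and then identify isomorphic singularities via the $P_2\leftrightarrow P_3$ swap to avoid double counting. Although each step is elementary, the complete enumeration is the core of the argument and is where most of the work lies; a secondary subtlety is checking Type $E_7$ carefully, since it has a non-trivial end curve yet ultimately contributes no new case because of the arithmetic of $3s-2$.
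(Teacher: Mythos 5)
Your proposal is correct and follows essentially the same route as the paper: the paper likewise reduces to Theorem \ref{ARnG} via almost-reducedness of $Z_f$, restricts $(q_1,q_2,q_3)$ to the platonic triples by log-terminality, forces $\deg E=2$, and converts branches to fractions via the chain formula, with the final enumeration presented as the list of resolution graphs following the theorem. Your explicit diophantine analysis of $(k+1)s-k=q$ and the type-by-type bookkeeping (including the checks that $E_7$ and $E_8$ shapes only occur in the Gorenstein case) simply spell out what the paper leaves implicit.
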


\begin{enumerate}

\item[] Type (2,2,n)  $(n \ge 1)$

\begin{picture}(200,60)(-40,0)
    \thicklines
\put(10,40){$(1)$}
\put(30,17){\circle*{6}}
\put(33,17){\line(1,0){23}}
\put(24,5){{\tiny $-s$}}
%
\put(61,17){\circle{6}}
%
\put(65,17){\line(1,0){20}}
\put(86,14){$\cdots$}
%
\put(100,17){\line(1,0){21}}
\put(125,17){\circle{6}}
%
\put(128,17){\line(1,0){23}}
\put(155,17){\circle{6}}
%
\put(158,17){\line(1,0){24}}
\put(185,17){\circle{6}}
%
\put(155,19){\line(0,1){21}}
\put(155,43){\circle{6}}
\end{picture}

\item[] Type (2,3,3)

\begin{picture}(200,60)(-30,0)
    \thicklines
\put(-20,45){$(2)$}
\put(25,17){\circle*{6}}
\put(16,5){{\tiny $-3$}}
%
\put(28,17){\line(1,0){23}}
\put(55,17){\circle{6}}
%
\put(58,17){\line(1,0){24}}
\put(85,17){\circle*{6}}
\put(78,5){{\tiny $-3$}}
%
\put(55,20){\line(0,1){20}}
\put(55,43){\circle{6}}

\put(170,45){$(3)$}
\put(197,17){\line(1,0){24}}
\put(193,17){\circle{6}}

%
\put(225,17){\circle{6}}
%
\put(228,17){\line(1,0){23}}
\put(255,17){\circle{6}}
%
\put(258,17){\line(1,0){24}}
\put(285,17){\circle*{6}}
\put(278,5){{\tiny $-3$}}
%
\put(255,20){\line(0,1){20}}
\put(255,43){\circle{6}}
\end{picture}


\item[] Type (2,3,4)

\begin{picture}(200,60)(60,0)
\thicklines
\put(10,45){$(4)$}
\put(27,17){\line(1,0){24}}
\put(23,17){\circle{6}}
%
\put(55,17){\circle{6}}
%
\put(58,17){\line(1,0){23}}
\put(85,17){\circle{6}}
%
\put(88,17){\line(1,0){24}}
\put(115,17){\circle*{6}}
\put(108,5){{\tiny $-4$}}
%
\put(85,20){\line(0,1){20}}
\put(85,43){\circle{6}}
\end{picture}
\begin{picture}(200,60)(140,0)
    \thicklines
\put(10,45){$(5)$}
\put(55,17){\circle*{6}}
\put(46,5){{\tiny $-3$}}
%
\put(58,17){\line(1,0){23}}
\put(85,17){\circle{6}}
%
\put(88,17){\line(1,0){24}}
\put(115,17){\circle*{6}}
\put(108,5){{\tiny $-4$}}
%
\put(85,20){\line(0,1){20}}
\put(85,43){\circle{6}}
\put(150,45){$(6)$}
\put(167,17){\line(1,0){24}}
\put(163,17){\circle{6}}
%
\put(197,17){\line(1,0){24}}
\put(193,17){\circle{6}}
%
\put(225,17){\circle{6}}
%
\put(228,17){\line(1,0){23}}
\put(255,17){\circle{6}}
%
\put(258,17){\line(1,0){24}}
\put(285,17){\circle*{6}}
\put(278,5){{\tiny $-3$}}
%
\put(255,20){\line(0,1){20}}
\put(255,43){\circle{6}}
\end{picture}

\item[] Type(2,3,5)

\begin{picture}(200,60)(60,0)
\put(20,45){$(7)$}
\put(47,17){\line(1,0){24}}
\put(43,17){\circle{6}}

%
\put(75,17){\circle{6}}
%
\put(78,17){\line(1,0){23}}
\put(105,17){\circle{6}}
%
\put(108,17){\line(1,0){24}}
\put(135,17){\circle*{6}}
\put(128,5){{\tiny $-5$}}
%
\put(105,19){\line(0,1){21}}
\put(105,43){\circle{6}}
\end{picture}
\begin{picture}(200,60)(200,0)
\put(100,45){$(8)$}
\put(127,17){\line(1,0){24}}
\put(123,17){\circle{6}}
%
\put(155,17){\circle{6}}
%
\put(158,17){\line(1,0){23}}
\put(185,17){\circle{6}}
%
\put(188,17){\line(1,0){24}}
\put(215,17){\circle{6}}
%
\put(185,19){\line(0,1){21}}
\put(185,43){\circle{6}}
%
\put(218,17){\line(1,0){24}}
\put(245,17){\circle*{6}}
\put(238,5){{\tiny $-3$}}
\put(260,45){$(9)$}
\put(275,17){\circle*{6}}
\put(266,5){{\tiny $-3$}}
%
\put(278,17){\line(1,0){23}}
\put(305,17){\circle{6}}
%
\put(308,17){\line(1,0){24}}
\put(335,17){\circle*{6}}
\put(328,5){{\tiny $-5$}}
%
\put(305,19){\line(0,1){21}}
\put(305,43){\circle{6}}
\end{picture}
\begin{picture}(200,60)(40,0)
\thicklines
\put(10,45){$(10)$}
\put(27,17){\line(1,0){24}}
\put(23,17){\circle*{6}}
\put(13,5){{\tiny $-3$}}
%
\put(55,17){\circle{6}}
%
\put(58,17){\line(1,0){23}}
\put(85,17){\circle{6}}
%
\put(88,17){\line(1,0){24}}
\put(115,17){\circle*{6}}
\put(108,5){{\tiny $-3$}}
%
\put(85,20){\line(0,1){20}}
\put(85,43){\circle{6}}
\end{picture}
\begin{picture}(200,60)(100,0)
\thicklines
\put(40,45){$(11)$}
\put(47,17){\line(1,0){24}}
\put(43,17){\circle{6}}
%
\put(77,17){\line(1,0){24}}
\put(73,17){\circle{6}}
%
\put(107,17){\line(1,0){24}}
\put(103,17){\circle{6}}
%
\put(135,17){\circle{6}}
%
\put(138,17){\line(1,0){23}}
\put(165,17){\circle{6}}
%
\put(168,17){\line(1,0){24}}
\put(195,17){\circle*{6}}
\put(188,5){{\tiny $-3$}}
%
\put(165,19){\line(0,1){21}}
\put(165,43){\circle{6}}
%
\end{picture}
\end{enumerate}

\begin{rem}
It seems (5)  in our list is missing in \cite[Proposition 3.5]{Di}. 
\end{rem}

\par \vspace{2mm}
For quotient singularities, $\ell_A(A/\Tr_A(K_A))$ is bounded
above by $e(A)-1$.

\begin{prop} \label{res-quot}
Suppose that $A$ is a quotient singularity which is neither Gorenstein nor cyclic quotient singularity. 
Then we have
\[
\ell_A(A/\Tr_A(K_A))=e(A)-1-\sum_{E_i : end} (-E_i^2-2). 
\]
In particular, $\ell_A(A/\Tr_A(K_A)) \le e(A)-1$. 
\end{prop}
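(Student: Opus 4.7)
The plan is to combine \thmref{rat-cantr} with the cohomological features of rational singularities to reduce the length to $\chi(\cO_F)$, and then evaluate the resulting intersection number using Artin's multiplicity formula together with the almost-reduced structure of the fundamental cycle of a quotient singularity.

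First I would use \thmref{rat-cantr} to write $\Tr_A(K_A) = H^0(X, \cO_X(-F))$, where $F$ is the minimal cycle with $F+K_X$ anti-nef; in particular $F$ itself is anti-nef. The short exact sequence $0 \to \cO_X(-F) \to \cO_X \to \cO_F \to 0$ together with $H^1(\cO_X)=0$ (rationality) and $H^1(\cO_X(-F))=0$ (\lemref{l:invrat}(2), since $-F$ is nef) gives $\ell_A(A/\Tr_A(K_A))=h^0(\cO_F)$. Since $A$ is rational one also has $h^1(\cO_F)=0$, so Riemann--Roch yields
\[
\ell_A(A/\Tr_A(K_A))=\chi(\cO_F)=-F(F+K_X)/2.
\]

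Next, from $\chi(\Z)=1$ and Artin's formula $e(A)=-\Z^2$ I would derive the master identity
\[
e(A)-2 \;=\; \Z\cdot K_X \;=\; \sum_i z_i(-E_i^2-2),
\]
using $K_X\cdot E_i=-E_i^2-2$ (adjunction on the smooth rational curve $E_i$). Because any quotient singularity has almost reduced fundamental cycle, $z_i=1$ whenever $-E_i^2\geq 3$, and so $e(A)-2=\sum_{-E_i^2\ge 3}(-E_i^2-2)$. Substituting into the desired formula reduces the claim to
\[
-F(F+K_X)/2 \;=\; 1 \;+\; \sum_{E_i\ \text{non-end},\ -E_i^2\ge 3}(-E_i^2-2).
\]

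The crux is to compute $F$ explicitly for a non-cyclic, non-Gorenstein quotient singularity. The resolution graph is star-shaped with central curve $E_0$ and three Hirzebruch--Jung branches; writing $F=\Z+G$ with $G\ge 0$, the constraint $(F+K_X)E_j\le 0$ is equivalent to $G\cdot E_j\le (E_j^2+2)-\Z\cdot E_j$. A branch-by-branch analysis shows that for an end curve $E_j$ with $-E_j^2\geq 3$ the right-hand side is already $\ge 0$ (the shape of a Hirzebruch--Jung chain at its end, combined with $z_j = 1$, forces $\Z\cdot E_j \le E_j^2+2$), so no correction is needed; whereas at a non-end curve $E_j$ with $-E_j^2\geq 3$, both neighbors have coefficient $\geq 1$ in $\Z$, and the minimal $G$ must contribute exactly $(-E_j^2-2)$ to $\chi(\cO_F)-\chi(\cO_\Z)$. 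Summing along a computation sequence $\Z=C_0<C_1<\cdots<C_m=F$ via $\chi(\cO_{C_k})-\chi(\cO_{C_{k-1}})=1-C_{k-1}\cdot E_{j_k}$ yields the required formula.

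The main obstacle is this branch-by-branch combinatorial bookkeeping: identifying, for each Hirzebruch--Jung chain attached to $E_0$, exactly which components lie in the support of $G$ and with what multiplicity. Once this is established, the stated bound $\ell_A(A/\Tr_A(K_A))\le e(A)-1$ is immediate, since each omitted term $-E_i^2-2$ is nonnegative.
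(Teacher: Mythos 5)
Your reduction is sound and coincides with the paper's framework: $\ell_A(A/\Tr_A(K_A))=h^0(\cO_F)=\chi(\cO_F)=-F(F+K_X)/2$ follows from \thmref{rat-cantr} together with $H^1(\cO_X(-F))=0$ (nefness of $-F$ and \lemref{l:invrat}) and rationality, and the identity $e(A)-2=K_X\Z=\sum_{-E_i^2\ge 3}(-E_i^2-2)$, valid because the fundamental cycle of a quotient singularity is almost reduced, correctly converts the assertion into $\chi(\cO_F)=1+\sum(-E_i^2-2)$, the sum running over non-end curves with $-E_i^2\ge 3$. This is exactly the paper's starting point ($\ell=1-p_a(F)$ combined with \eqref{eq:chiadd}).

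However, the step you yourself flag as ``the main obstacle'' --- the explicit determination of $F$ --- is the entire mathematical content of the proposition, and your proposal does not supply it. The statements you make about the outcome are asserted rather than derived: that the minimal correction $Y=F-\Z$ ``contributes exactly $(-E_j^2-2)$'' at each non-end curve with $-E_j^2\ge 3$ is precisely the identity to be proved, $\chi$ does not localize curve-by-curve in any evident way, and the support of $F-\Z$ is \emph{not} in general the full set of non-end components. The paper's proof consists of three genuinely different cases: when the central curve has $-E_0^2\ge 3$, the cycle $\Z$ is reduced and $F-\Z$ is the reduced sum of all non-end curves; when the central curve and one entire branch consist of $(-2)$-curves (so $\Z$ has coefficient $2$ near the center), $F-\Z$ is supported only on the segment of one branch running from the first component with $-E_k^2\ge 3$ to the penultimate component (the omitted non-end curves are all $(-2)$-curves, which is why the numerical answer is unaffected); and two residual small graphs are computed by hand. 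Each case moreover requires verifying both that $K_X+\Z+Y$ is anti-nef and that $\Z+Y$ is \emph{minimal} with this property (anti-nefness alone only bounds $F$ from above). Without this case analysis, or a substitute for it, the argument is an outline of the correct strategy rather than a proof, even though every step you did write out is correct.
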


\begin{proof}
By assumption, the resolution graph is the following shape, where $n \ge m \ge 1$ are integers:

\par 
\begin{picture}(200,50)(-30,10)
    \thicklines
\put(30,22){\circle*{6}}
\put(25,10){{\tiny $E_n$}}
\put(33,22){\line(1,0){17}}
\put(53,22){\circle*{6}}
\put(48,10){{\tiny $E_{n-1}$}}
\put(56,22){\line(1,0){10}}
\put(72,19){$\cdots$}
\put(90,22){\line(1,0){5}}
\put(98,22){\circle*{6}}
\put(93,10){{\tiny $E_{1}$}}
\put(102,22){\line(1,0){15}}
\put(121,22){\circle*{6}}
\put(116,10){{\tiny $E_{0}$}}
\put(125,22){\line(1,0){15}}
\put(145,22){\circle*{6}}
\put(140,10){{\tiny $F_{1}$}}
\put(148,22){\line(1,0){5}}
\put(158,19){$\cdots$}
\put(121,25){\line(0,1){12}}
\put(121,40){\circle{6}}
\put(127,37){{\tiny $G$}}
\put(175,22){\line(1,0){11}}
\put(190,22){\circle*{6}}
\put(184,10){{\tiny $F_{m-1}$}}
\put(193,22){\line(1,0){23}}
\put(220,22){\circle*{6}}
\put(215,10){{\tiny $F_{m}$}}
\end{picture}
\par \vspace{2mm}
We put $b_i=-E_i^2$ and $c_j=-F_j^2$ for each $i$ and $j$. 
In order to prove the formula, we may assume that $A$ is \textit{not}
nearly Gorenstein. 
Moreover, when $e(A) \le 2$, the assertion is true (cf. \cite{MY}). 
So, it is enough to consider the following $3$ cases. 
\par 
Since $p_a(\Z)=0$, we have 
\begin{equation}
e(A)=-\Z^2=2+K_X\Z=2+\sum_{i=0}^n (b_i-2)+\sum_{j=1}^m (c_j-2). 
\end{equation}
\begin{flushleft}
{\bf Case 1: The case where $b_0 \ge 3$. }
\end{flushleft} 
\par 
Note that $\Z$ is reduced. 
If $Y=E_{n-1}+\cdots+E_1+E_0+F_1+\cdots+F_{m-1}$, then 
one can show that $K_X+\Z+Y$ is anti-nef and thus $F=\Z+Y$. 
Since $Y$ can be regarded as the fundamental cycle on $\Supp(Y)$, 
$p_a(Y)=0$.    
Then by Riemann-Roch formula, we get 
\[
\ell_A(A/\Tr_A(K_A))
= 1-p_a(F) 
= 1-p_a(\Z)-p_a(Y)-Y\Z+1 
= 2-Y\Z. 
\]
On the other hand, direct computation yields 
\[
2-Y\Z=2-\sum_{i=1}^{n-1}(-b_i+2)-(-b_0+3)-\sum_{j=1}^{m-1}(-c_j+2)
=e(A)-1-(b_n-2)-(c_m-2)
\]
Thus we obtain the required formula in this case. 
\begin{flushleft}
{\bf Case 2: The case where $b_0 =2$, $m=1$ and $c_1=2$.}
\end{flushleft} 

\begin{picture}(200,50)(-30,10)
    \thicklines
\put(30,22){\circle*{6}}
\put(25,10){{\tiny $E_n$}}
\put(33,22){\line(1,0){17}}
\put(53,22){\circle*{6}}
\put(48,10){{\tiny $E_{n-1}$}}
\put(56,22){\line(1,0){10}}
\put(72,19){$\cdots$}
\put(90,22){\line(1,0){5}}
\put(98,22){\circle*{6}}
\put(93,10){{\tiny $E_{1}$}}
\put(102,22){\line(1,0){15}}
\put(121,22){\circle{6}}
\put(116,10){{\tiny $E_{0}$}}
\put(125,22){\line(1,0){15}}
\put(145,22){\circle{6}}
\put(140,10){{\tiny $F_{1}$}}
%
\put(121,25){\line(0,1){12}}
\put(121,40){\circle{6}}
\put(127,37){{\tiny $G$}}
\end{picture}

\par \vspace{3mm}
As $A$ is not a nearly Gorenstein, there exists an integer $k$ with $1 \le k < n$ such that $b_k \ge 3$. 
We may assume $k$ is the least integer among those integers $k$. 
Then 
\[
\Z=2(E_0+E_1+\cdots+E_{k-1})+E_k+\cdots+E_n+F_1+G. 
\]
If we put $Y=E_k+E_{k+1}+\cdots + E_{n-1}$, one can show that 
$F=\Z+Y$. 
Then by Riemann-Roch formula, we get 
\[
\ell_A(A/\Tr_A(K_A))
= 1-p_a(F) 
= 1-p_a(\Z)-p_a(Y)-Y\Z+1 
= 2-Y\Z. 
\]
On the other hand, direct computation yields 
\[
2-Y\Z=2-\sum_{i=k+1}^{n-1}(-b_i+2)-(-b_k+3)
=e(A)-1-(b_n-2)-(c_1-2). 
\]
Thus we obtain the required formula in this case.
\begin{flushleft}
{\bf Case 3: The case where $b_0 =2$, $c_1 \ge 3$.}
\end{flushleft} 
We must consider the following two graphs: 

\begin{picture}(200,60)(0,0)
\thicklines
\put(27,17){\line(1,0){24}}
\put(23,17){\circle{6}}
%
\put(50,23){{\tiny $E_1$}}
\put(55,17){\circle*{6}}
\put(46,5){{\tiny $-3$}}
%
\put(58,17){\line(1,0){23}}
\put(85,17){\circle{6}}
%
\put(88,17){\line(1,0){24}}
\put(115,17){\circle*{6}}
\put(108,5){{\tiny $-3$}}
\put(112,23){{\tiny $F_1$}}
%
\put(85,20){\line(0,1){20}}
\put(85,43){\circle{6}}
\end{picture}
\begin{picture}(200,60)(160,0)
\put(127,17){\line(1,0){24}}
\put(123,17){\circle{6}}
%
\put(155,17){\circle{6}}
%
\put(158,17){\line(1,0){23}}
\put(185,17){\circle{6}}
%
\put(188,17){\line(1,0){24}}
\put(215,17){\circle*{6}}
\put(208,5){{\tiny $-3$}}
%
\put(185,19){\line(0,1){21}}
\put(185,43){\circle{6}}
%
\put(218,17){\line(1,0){24}}
\put(245,17){\circle{6}}
\end{picture}
\par \vspace{1mm}
In the first graph, we have $e(A)=4$. 
One can easily see that $F=Z+E_1$. 
Then $\ell_A(A/\Tr_A(K_A))=2=e(A)-1-(c_1-2)$. 
\par \vspace{1mm}
In the second graph, $A$ is a rational triple point. 
Then $\ell_A(A/\Tr_A(K_A))=2=e(A)-1$. 
\par \vspace{2mm}
In all cases, we proved the required formula. 
\end{proof}

\par \vspace{2mm}
If $A$ is a rational singularity, 
$\ell_A(A/\Tr_A(K_A))$ has no upper bound even if we fix $e(A)$.  

\begin{ex}[see \cite{MY}] \label{MYTypeA}
Suppose that $\ell,m,n$ are integers with $\ell,m,n \ge 2$. 
Put \[
A=\mathbb{C}[x,y,z,t]_{(x,y,z,t)}/(xy-t^{\ell+m+2},\, xz-t^{n+2}-zt^{\ell+1},\,yz+yt^{n+1}-zt^{m+1}),  
\]
a rational triple point of type $(A_{\ell,m,n})$. 
Then we have 
\[
\ell_A(A/\Tr_A(K_A))=\min\{\ell,m,n\}+1. 
\]

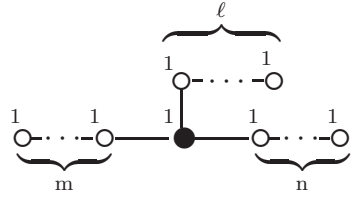
\begin{figure}[htb]
\begin{picture}(200,60)(-40,0)
    \thicklines
\put(25,18){{\tiny $1$}}
\put(30,12){\circle{6}}
\put(33,12){\line(1,0){4}}
\put(28,7){$\underbrace{\phantom{AAAA}}_{\text {m}}$}
\put(38,9){$\cdots$}
\put(52,12){\line(1,0){4}}
  \put(55,18){{\tiny $1$}}
\put(61,12){\circle{6}}
\put(65,12){\line(1,0){20}}
  \put(83,18){{\tiny $1$}}
\put(91,12){\circle*{8}}
\put(90,16){\line(0,1){15}}
\put(95,12){\line(1,0){20}}
  \put(115,18){{\tiny $1$}}
\put(120,12){\circle{6}}
  \put(145,18){{\tiny $1$}}
\put(150,12){\circle{6}}
\put(128,9){$\cdots$}
\put(123,12){\line(1,0){4}}
\put(142,12){\line(1,0){4}}
\put(118,7){$\underbrace{\phantom{AAAA}}_{\text {n}}$}
\put(83,38){{\tiny $1$}}
\put(90,34){\circle{6}}
  \put(120,40){{\tiny $1$}}
\put(125,34){\circle{6}}
\put(100,31){$\cdots$}
\put(94,34){\line(1,0){4}}
\put(117,34){\line(1,0){4}}
\put(83,40){$\overbrace{\phantom{AAAAA}}^{\text {$\ell$}}$}
\end{picture}
\caption{\label{fig:AtypeRTP}  Rational Triple Point of type $A_{\ell,m,n}$
}
\end{figure}
\end{ex}

\begin{ex}\label{e=4}  
Let $R=R( \mathbb{P}^1, D)$ with $D = (1/n) (P_1+P_2) + 2/(2n+1) P_3$ for 
$n\ge 2$. 
If we put $P_1 = \infty, P_2 = 0, P_3 = 1$, then we have; 
\[
R = k [T, x^{-1} T^{n+1},\, (x-1)^{-1} T^{n+1},\, x T^{n+1}, \,x^2 T^{2n+1} ].
\] 
The minimal resolution of $R$ is a star-shaped graph with central curve $E_0$ with 
$E_0^2= -3$ and $3$ branches $E_{i,1}-E_{i,2}-\cdots - E_{i, n-1}$ for $i=1,2,3$ 
with $E_{i,j}^2 = -2$ for all $i,j$ except that $E_{3,n-1}^2 = -3$. 
Then we see that $\Z$ is reduced and  
$F = n E_0 + (n-1)\sum_{i=1}^3 E_{i,1} + (n-2)\sum_{i=1}^3 E_{i,2}+ 
\cdots + \sum_{i=1}^3 E_{i,n-1}$. \par
Since $F^2 = - 3n -1$ and $K_X F = n+1$, we see that 
\[
\ell_A( A/\Tr K_A) = \ell_A (A/ H^0(X, \cO_X(-F)) = n.
\]    
\end{ex}

\begin{figure}[htb]
\begin{picture}(300,55)(-20,0)

\thicklines
%
\put(4,17){\line(1,0){10}}
\put(1,17){\circle{6}}
\put(-5,5){{\tiny $E_{2,n-1}$}}
\put(15,14){$\cdots$}
\put(30,17){\line(1,0){10}}
%
\put(47,17){\line(1,0){24}}
\put(43,17){\circle{6}}
\put(38,5){{\tiny $E_{2,1}$}}
%
\put(75,17){\circle*{6}}
\put(70,5){{\tiny $E_{0}$}}
%
\put(78,17){\line(1,0){23}}
\put(105,17){\circle{6}}
\put(100,5){{\tiny $E_{1,1}$}}
\put(108,17){\line(1,0){15}}
\put(128,14){$\cdots$}
%
\put(151,17){\line(1,0){10}}
\put(165,17){\circle{6}}
\put(160,5){{\tiny $E_{1,n-1}$}}
%
\put(75,20){\line(0,1){20}}
\put(75,43){\circle{6}}
\put(70,49){{\tiny $E_{3,1}$}}
%
\put(78,43){\line(1,0){22}}
\put(105,43){\circle{6}}
\put(100,49){{\tiny $E_{3,2}$}}
\put(108,43){\line(1,0){15}}
\put(128,40){$\cdots$}
%
\put(151,43){\line(1,0){10}}
\put(165,43){\circle*{6}}
\put(160,49){{\tiny $E_{3,n-1}$}}
\end{picture}
\caption{\label{fig:gradedRat}  Graded rational singularity of 
$\ell_A(A/\Tr K_A)=n$}
\end{figure}
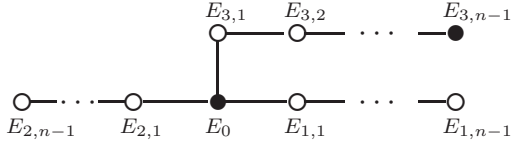

\par \vspace{2mm}
We recall the definition of Ulrich ideals. 

\begin{defn}[\textrm{cf. \cite{GOTWY1, GOTWY2}}]
Let $I$ be an $\m$-primary ideal of $A$ which is not a parameter ideal. 
The ideal $I$ is called an \textit{Ulrich ideal} if there exists a minimal 
reduction $\frq$ of $I$ such that $I^2=\frq I$ and $I/I^2$ 
is a free $A/I$-module. 
\end{defn}

\par \vspace{2mm}
Assume that $A$ is non-Gorenstein rational singularity and 
let $I \subset A$ 
be an $\m$-primary ideal of $A$. 
Let $\mathcal{X}_A$ denote the set of Ulrich ideals of $A$. 
Then 
\begin{enumerate}
\item
Any Ulrich ideal $I$ is an integrally closed ideal which 
is represented on the minimal resolution of singularities; see \cite[Section 6]{GOTWY2}.
\item $I \in \mathcal{X}_A$ if and only if 
$e(A)=(\mu_A(I)-1)\ell_A(A/I)$; see \cite[Lemma 2.3]{GOTWY1}.
\item If $I \subset A$ is an Ulrich ideal, then $\Tr_A(K_A) \subset I$; 
see \cite[Proposition 2.12, Corollary 2.14]{GK}. 
In particular, if $A$ is nearly Gorenstein, then $\mathcal{X}_A =\{\m\}$. 
\end{enumerate}

\par 
If $A$ is a rational triple point, then 
$\Tr_A(K_A)$ is an Ulrich ideal and 
$\mathcal{X}_A=\{I \subsetneq A \,|\, I \supset \Tr_A(K_A)\}$;
see \cite{MY} for details. 
So it is natural to ask the following question. 

\begin{quest}
Assume that $A$ is a rational singularity. 
\begin{enumerate}
\item Is $\Tr_A(K_A)$ an Ulrich ideal? 
\item If $\mathcal{X}_A=\{\m\}$ then is $A$ nearly Gorenstein?
\end{enumerate}
\end{quest}

\par
However, if $e(A) \ge 4$, then this is \textit{not} true in general. 

\begin{ex} \label{ex: nGneUG}
If the resolution graph is the following, 
then $A$ is a quotient singularity with $e(A)=4$. 
Moreover, it is not a nearly Gorenstein ring, 
but $\mathcal{X}_A = \{\m\}$.  
Indeed, $\ell_A(A/\Tr_A(K_A))=2$. 
In particular, $\Tr_A(K_A)$ is \textit{not} an Ulrich ideal.  

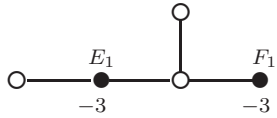
\begin{figure}[htb]
\begin{picture}(200,60)(0,0)
\thicklines
\put(27,17){\line(1,0){24}}
\put(23,17){\circle{6}}
%
\put(50,23){{\tiny $E_1$}}
\put(55,17){\circle*{6}}
\put(46,5){{\tiny $-3$}}
%
\put(58,17){\line(1,0){23}}
\put(85,17){\circle{6}}
%
\put(88,17){\line(1,0){24}}
\put(115,17){\circle*{6}}
\put(108,5){{\tiny $-3$}}
\put(112,23){{\tiny $F_1$}}
%
\put(85,20){\line(0,1){20}}
\put(85,43){\circle{6}}
\end{picture}
\caption{\label{fig:notUlrich}  $\tr_A(K_A)$ is not Ulrich} 
\end{figure}

\end{ex}


\end{document}